\def\DateTime{3/April/2009, 10:00 (Kyoto)}
\def\Version{Version $1.0$}
\def\yes{\if00}
\def\no{\if01}
\def\iftwelvept{\no}
\def\ifusepdf{\no}
\def\ifpsfont{\yes}
\theoremstyle{plain}
\newtheorem{Theorem}{Theorem}[section]
\newtheorem{Proposition}[Theorem]{Proposition}
\newtheorem{Lemma}[Theorem]{Lemma}
\newtheorem{Corollary}[Theorem]{Corollary}
\newtheorem{Claim}{Claim}[Theorem]
\theoremstyle{definition}
\newtheorem{Remark}[Theorem]{Remark}
\newtheorem{Example}[Theorem]{Example}
\def\rom{\textup}
\newcommand{\ZZ}{{\mathbb{Z}}}
\newcommand{\QQ}{{\mathbb{Q}}}
\newcommand{\RR}{{\mathbb{R}}}
\newcommand{\CC}{{\mathbb{C}}}
\newcommand{\PP}{{\mathbb{P}}}
\newcommand{\KK}{{\mathbb{K}}}
\newcommand{\OO}{{\mathcal{O}}}
\newcommand{\Proj}{\operatorname{Proj}}
\newcommand{\Image}{\operatorname{Image}}
\newcommand{\Coker}{\operatorname{Coker}}
\newcommand{\Spec}{\operatorname{Spec}}
\newcommand{\rank}{\operatorname{rk}}
\newcommand{\adeg}{\widehat{\operatorname{deg}}}
\newcommand{\avol}{\widehat{\operatorname{vol}}}
\newcommand{\zeros}{\operatorname{div}}
\newcommand{\quot}{\operatorname{quot}}
\newcommand{\Bs}{\operatorname{Bs}}
\newcommand{\CQED}{{\unskip\nobreak\hfil\penalty50\quad\null\nobreak\hfil
{$\Box$}\parfillskip0pt\finalhyphendemerits0\par\medskip}}
\newcommand{\rest}[2]{\left.{#1}\right\vert_{{#2}}}
\newcommand{\Nm}{\operatorname{\|\text{\textperiodcentered}\|}}
\begin{document}

\title[Free basis consisting of strictly small sections]%
{Free basis consisting of strictly small sections}
\author{Atsushi Moriwaki}
\address{Department of Mathematics, Faculty of Science,
Kyoto University, Kyoto, 606-8502, Japan}
\email{moriwaki@math.kyoto-u.ac.jp}
\date{\DateTime, (\Version)}
\begin{abstract}
Let $X$ be a projective arithmetic variety and $\overline{L}$ a continuous
hermitian invertible sheaf on $X$.
In this paper, we would like to consider a mild and appropriate condition to
guarantee that $H^0(X, nL)$ has a free basis consisting of strictly small sections
for $n \gg 1$.
\end{abstract}
\subjclass{Primary 14G40; Secondary 11G50}

\maketitle

\renewcommand{\theTheorem}{\Alph{Theorem}}

\section*{Introduction}

Let $X$ be a $d$-dimensional projective arithmetic variety and $L$ an
invertible sheaf on $X$. We fix a continuous hermitian metric $\vert\cdot\vert$ of $L$.
In Arakelov geometry, we frequently ask whether
$H^0(X,L)$ has a free basis consisting of strictly small sections,
that is, sections whose supremum norm are less than $1$.
However, we know few about this problem in general.
For example, Zhang \cite{ZhPos} proves that
$H^0(X, nL)$ possesses a free basis as above for $n \gg 1$ if
the following conditions are satisfied:
\begin{enumerate}
\renewcommand{\labelenumi}{\rom{(\arabic{enumi})}}
\item
$X_{\QQ}$ is regular, $L_{\QQ}$ is ample and
$L$ is nef on every fiber of $X \to \Spec(\ZZ)$.

\item
The metric $\vert\cdot\vert$ is $C^{\infty}$ and
the first Chern form $c_1(L,\vert\cdot\vert)$ is semipositive.

\item
For some positive integer $n_0$,
there are strictly small sections $s_1, \ldots, s_l$ of $n_0L$ such that
$\{ x \in X_{\QQ} \mid s_1(x) = \cdots = s_l(x) = 0 \} = \emptyset$.
\end{enumerate}
From viewpoint of birational geometry, the ampleness of $L_{\QQ}$ is
rather strong. Through Example~\ref{example:projective:line} and 
Example~\ref{example:projective:plane}, the base point freeness is not a necessarily
condition, but we can realize that
the condition (3) is substantially crucial to find a basis consisting of strictly small sections.
In this paper, we would like to consider the problem under the 
mild and appropriate assumption (3) (cf. Corollary~\ref{cor:intro:B}).

Let $R$ be a graded subring of $\bigoplus_{n=0}^{\infty} H^0(X, nL)$ over $\ZZ$.
For each $n$, we assign a norm $\Nm_n$ to $R_n \otimes_{\ZZ} \RR$ in such a way that
\[
\Vert s \cdot s' \Vert_{n+n'} \leq \Vert s \Vert_{n} \cdot \Vert s' \Vert_{n'}
\]
holds for all $s \in R_n \otimes_{\ZZ} \RR$ and $s' \in R_{n'} \otimes_{\ZZ} \RR$.
Then 
\[
(R, \Nm) = \bigoplus_{n=0}^{\infty} (R_n, \Nm_n)
\]
is called a {\em normed graded subring of $L$}. An important point is that
each norm $\Nm_n$ does not necessarily arise from the metric of $L$, so that
we can obtain several advantages to proceed with arguments.
The following theorem is one of the main results of this paper.

\begin{Theorem}
\label{thm:intro:A}
If $R \otimes_{\ZZ} \QQ$ is noetherian and
there are homogeneous elements $s_1, \ldots, s_l \in R$ of positive degree
such that 
$\{ x \in X_{\QQ} \mid s_1(x) = \cdots = s_l(x) = 0 \} = \emptyset$,
then there is a positive constant $B$ such that
\[
\lambda_{\ZZ}(R_n,\Nm_n) \leq B n^{(d+2)(d-1)/2}
\left(  \max \left\{ \Vert s_1 \Vert^{1/\deg(s_1)},
\ldots, \Vert s_l \Vert^{1/\deg(s_l)} \right\}\right)^n
\]
for all $n \geq 1$,
where $\lambda_{\ZZ}(R_n,\Nm_n)$ is the infimum of the set of real numbers $\lambda$
such that
there is a free $\ZZ$-basis $x_1, \ldots, x_r$ of  $R_n$ with
$\max \{ \Vert x_1 \Vert_n, \ldots, \Vert x_r \Vert_n \} \leq \lambda$.
\end{Theorem}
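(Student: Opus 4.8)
The plan is to control, for each $n$, the \emph{largest} successive minimum $\mu_n$ of the lattice $(R_n,\Vert\cdot\Vert_n)$ by a simple multiplicative recursion, and to translate back into a bound on $\lambda_\ZZ(R_n)$ only at the very end. The recursion is driven by the submultiplicativity $\Vert s\cdot s'\Vert\le\Vert s\Vert\,\Vert s'\Vert$, and it stays free of lattice-index contributions precisely because $R\otimes_\ZZ\QQ$ is a domain (being a subring of sections of powers of $L$ on the integral scheme $X_\QQ$), so that multiplication by a nonzero homogeneous element preserves $\QQ$-linear independence.

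The first step is to deduce from the hypothesis on $s_1,\dots,s_l$ the purely algebraic statement that, for some $n_0$ and all $n\ge n_0$,
\[
R_n\otimes_\ZZ\QQ=\sum_{i=1}^{l}s_i\cdot\bigl(R_{n-\deg(s_i)}\otimes_\ZZ\QQ\bigr).
\]
Set $A=R\otimes_\ZZ\QQ$, regarded as a graded subring of $\bigoplus_n H^0(X_\QQ,nL_\QQ)$. Its base locus is contained in $\bigcap_i\{x\in X_\QQ:s_i(x)=0\}=\emptyset$, so there is a canonical morphism $X_\QQ\to\Proj(A)$; it is surjective because no nonzero homogeneous element of $A$ vanishes identically on the integral variety $X_\QQ$. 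Hence the common zero scheme $V_+(s_1,\dots,s_l)\subseteq\Proj(A)$ pulls back to $\emptyset$ and is therefore itself empty, i.e. the irrelevant ideal $A_+$ lies in $\sqrt{(s_1,\dots,s_l)}$. Since $A$ is noetherian it is a finitely generated algebra over $A_0$, so a fixed power $A_+^k$ contains $A_{\ge n_0}$; thus $A_{\ge n_0}\subseteq(s_1,\dots,s_l)$, which is the displayed identity.

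Now put $e_i=\deg(s_i)$, $a=\max_i\Vert s_i\Vert^{1/e_i}>0$, $r_n=\rank(R_n)$, and let $\mu_n$ be the least $\mu$ such that $R_n$ contains $r_n$ linearly independent vectors of norm $\le\mu$. For $n\ge n_0$, multiplication by each $s_i$ sends $r_{n-e_i}$ independent vectors of $R_{n-e_i}$ of norm $\le\mu_{n-e_i}$ to the same number of independent vectors of $R_n$ of norm $\le\Vert s_i\Vert\mu_{n-e_i}\le a^{e_i}\mu_{n-e_i}$; by the identity above the union of these families spans $R_n\otimes_\ZZ\QQ$, so a maximal independent subfamily gives $\mu_n\le\max_i a^{e_i}\mu_{n-e_i}$. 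Unfolding this inequality down to degrees $<n_0+\max_i e_i$ and absorbing the finitely many base values into a constant yields $\mu_n\le C a^n$ for all $n\ge1$. Finally, a standard geometry-of-numbers estimate gives a free $\ZZ$-basis of $R_n$ consisting of vectors of norm $\le\tfrac{r_n+1}{2}\mu_n$ (one builds it greedily, reducing the correction terms against vectors that realise the successive minima), so $\lambda_\ZZ(R_n)\le\tfrac{r_n+1}{2}\mu_n$. Since $\dim X_\QQ=d-1$ we have $r_n\le h^0(X_\QQ,nL_\QQ)\le C'n^{d-1}$ for all $n\ge1$, and, using $(d+2)(d-1)/2\ge d-1$, we obtain a constant $B$ with
\[
\lambda_\ZZ(R_n)\le B\,n^{(d+2)(d-1)/2}\Bigl(\max_i\Vert s_i\Vert^{1/\deg(s_i)}\Bigr)^n\qquad(n\ge1).
\]

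I expect the genuinely delicate point to be the first step: translating ``$s_1,\dots,s_l$ have no common zero on $X_\QQ$'' into ``$A_{\ge n_0}\subseteq(s_1,\dots,s_l)$''. Because $A$ need not be generated in degree one, $X_\QQ\to\Proj(A)$ need not be a closed immersion, and one must argue through the base locus of $A$ and the nonvanishing of nonzero homogeneous sections rather than naively. By contrast, the recursion for $\mu_n$, its solution, and the passage from $\mu_n$ to $\lambda_\ZZ(R_n)$ are routine once one has made the conceptual choice to run the argument on the last successive minimum instead of on $\lambda_\ZZ$ directly; indeed this choice is what eliminates any dependence on the (a priori unbounded) indices $[R_n:\sum_i s_iR_{n-e_i}]$, which would otherwise wreck a direct induction.
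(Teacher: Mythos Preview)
Your argument is correct, and in fact it yields the sharper bound $\lambda_{\ZZ}(R_n,\Nm_n)\le B\,n^{d-1}a^n$, since your recursion gives $\mu_n=\lambda_{\QQ}(R_n,\Nm_n)\le C\,a^n$ with no polynomial factor at all; the exponent $(d+2)(d-1)/2$ in the statement is then a generous overestimate. Your route is genuinely different from the paper's. The paper derives Theorem~A from a more general technical result (its Theorem~3.1), proved by induction on $d=\dim X$: one restricts to a divisor cut out by a small section, filters the cokernel by a chain of ideal sheaves, and controls $\lambda_{\QQ}$ along the resulting chain of normed modules via a Minkowski-type lemma (Proposition~1.4). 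After several reductions (to $R$ generated in degree one, to $R_1$ base-point free via blowing up, and to $L$ very ample via passing to $\Proj R$) the hypotheses of Theorem~3.1 are met, and combining with $\lambda_{\ZZ}\le(\rank R_n)\lambda_{\QQ}$ produces the exponent $d(d-1)/2+(d-1)=(d+2)(d-1)/2$. Your approach bypasses the induction on dimension entirely by first extracting the purely algebraic consequence $A_n=\sum_i s_iA_{n-e_i}$ for $n\gg0$ and then running a one-line recursion on $\lambda_{\QQ}$, exploiting that $A=R\otimes\QQ$ is a domain so that multiplication by $s_i$ preserves linear independence. What you gain is simplicity and a better exponent; what the paper's approach buys is its Theorem~3.1, where the ``small section'' is only required to exist on each subvariety $Y$ after restriction (with a bound $\upsilon(Y)$ that may vary with $Y$), rather than coming from fixed global sections with empty common zero locus --- a flexibility essential for the arithmetic Nakai--Moishezon criteria in Section~4 but unnecessary for Theorem~A itself.
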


This is a consequence of the technical result Theorem~\ref{thm:base:strictly:small:sec},
which also yields variants of arithmetic Nakai-Moishezon's criterion
(cf. Theorem~\ref{thm:Nakai:Moishezon:1} and Theorem~\ref{thm:Nakai:Moishezon:2}).
As a corollary of the above theorem, we have the following:

\begin{Corollary}
\label{cor:intro:B}
Let $\overline{L}$ be a continuous hermitian invertible sheaf on $X$.
If the above condition \rom{(3)} is satisfied, in other words,
\[
\left\langle \{ s \in H^0(X, n_0L) \mid \Vert s \Vert_{\sup} < 1 \} \right\rangle_{\ZZ} \otimes_{\ZZ} \OO_X
\to n_0 L
\]
is surjective on $X_{\QQ}$ for some positive integer $n_0$,
then
$H^0(X, nL)$ has a free $\ZZ$-basis consisting of strictly small sections for $n \geq 1$.
\end{Corollary}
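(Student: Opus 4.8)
The plan is to apply Theorem~\ref{thm:intro:A} to the \emph{full} section ring of $L$, equipped with the supremum norms coming from the metric of $\overline{L}$. Concretely, I would set $R = \bigoplus_{n \geq 0} H^0(X, nL)$ and, for each $n$, take $\Nm_n$ to be the supremum norm on $R_n \otimes_{\ZZ} \RR = H^0(X, nL) \otimes_{\ZZ} \RR$ attached to the continuous hermitian metric of $n\overline{L}$. Since $\vert s s'\vert = \vert s\vert \cdot \vert s'\vert$ pointwise, one has $\Vert s s'\Vert_{\sup} \leq \Vert s\Vert_{\sup}\Vert s'\Vert_{\sup}$, so $(R,\Nm) = \bigoplus_n (R_n,\Nm_n)$ is a normed graded subring of $L$ in the sense of the paper.

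Next I would check the two hypotheses of Theorem~\ref{thm:intro:A}. For noetherianity: by flat base change $R \otimes_{\ZZ} \QQ \cong \bigoplus_{n} H^0(X_{\QQ}, nL_{\QQ})$, the section ring of $L_{\QQ}$ on $X_{\QQ}$; condition \rom{(3)} says in particular that $n_0 L_{\QQ}$ is generated by its global sections, hence $L_{\QQ}$ is semiample and its section ring is a finitely generated $\QQ$-algebra, in particular noetherian. For the second hypothesis, put $\Sigma = \{\, s \in H^0(X, n_0 L) \mid \Vert s\Vert_{\sup} < 1 \,\}$; condition \rom{(3)} says precisely that the members of $\Sigma$ have no common zero on $X_{\QQ}$, and since $X_{\QQ}$ is noetherian (hence quasi-compact) finitely many of them, say $s_1, \dots, s_l \in \Sigma$, already satisfy $\{ x \in X_{\QQ} \mid s_1(x) = \cdots = s_l(x) = 0 \} = \emptyset$. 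These are homogeneous elements of $R$ of degree $n_0 > 0$ with $\Vert s_i\Vert = \Vert s_i\Vert_{\sup} < 1$.

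I would then invoke Theorem~\ref{thm:intro:A}, which yields a constant $B > 0$ with
\[
\lambda_{\ZZ}(R_n, \Nm_n) \leq B\, n^{(d+2)(d-1)/2}\, c^{\,n} \qquad (n \geq 1), \qquad c := \max_{1 \leq i \leq l} \Vert s_i\Vert_{\sup}^{1/n_0}.
\]
Since $0 \leq c < 1$, the right-hand side tends to $0$ as $n \to \infty$, so $\lambda_{\ZZ}(R_n, \Nm_n) < 1$ for $n \gg 1$. By the very definition of $\lambda_{\ZZ}$ this means that $R_n = H^0(X, nL)$ admits a free $\ZZ$-basis $x_1, \dots, x_r$ with $\max_j \Vert x_j\Vert_{\sup} < 1$, i.e.\ a free $\ZZ$-basis consisting of strictly small sections, which is the assertion.

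Granting Theorem~\ref{thm:intro:A}, the deduction is routine and there is no serious obstacle; the only points requiring (easy) care are the identification $R \otimes_{\ZZ} \QQ \cong \bigoplus_n H^0(X_{\QQ}, nL_{\QQ})$ together with the finite generation of the section ring of the semiample sheaf $L_{\QQ}$ (needed for noetherianity), and the passage from the surjectivity form of condition \rom{(3)} to a finite family $s_1, \dots, s_l$ of strictly small sections with empty common zero locus on $X_{\QQ}$, which is immediate from quasi-compactness of $X_{\QQ}$.
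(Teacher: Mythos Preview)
Your argument is correct and matches the paper's own derivation: the paper records (in the Remark after Corollary~\ref{cor:base:point:free:small:sec}) that Corollary~\ref{cor:intro:B} follows from Corollary~\ref{cor:base:point:free:small:sec} together with Lemma~\ref{lem:lambda:lambda:prime}, which is exactly your route via Theorem~\ref{thm:intro:A}. The noetherianity step you flag is precisely the content of the short lemma following Corollary~\ref{cor:base:point:free:small:sec} in the paper, and your quasi-compactness remark to pass to finitely many $s_i$ is the routine point the paper leaves implicit.
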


\renewcommand{\theTheorem}{\arabic{section}.\arabic{Theorem}}
\renewcommand{\theClaim}{\arabic{section}.\arabic{Theorem}.\arabic{Claim}}
\renewcommand{\theequation}{\arabic{section}.\arabic{Theorem}.\arabic{Claim}}

\section{Normed $\ZZ$-module}
Let $(V,\Nm)$ be a normed finite dimensional vector space over $\RR$, that is,
$V$ is a finite dimensional vector space over $\RR$ and $\Nm$ is a norm of $V$.
Let $\alpha : W \to V$ be an injective homomorphism of finite dimensional vector spaces 
over $\RR$.
If we set $\Vert w \Vert_{W \hookrightarrow V} = \Vert\alpha(w)\Vert$ for $w \in W$, then
$\Nm_{W \hookrightarrow V}$ gives rise to a norm of $W$. This is called the {\em subnorm} of
$W$ induced by $W \hookrightarrow V$ and the norm $\Nm$ of $V$.
Next let $\beta : V \to T$ be a surjective homomorphism of finite dimensional vector spaces over $\RR$.
The {\em quotient norm} $\Nm_{V \twoheadrightarrow T}$ of $T$ induced by $V \twoheadrightarrow T$ and the norm $\Nm$ of $V$
is given by 
\[
\Vert t \Vert_{V \twoheadrightarrow T} = \inf \{ \Vert v \Vert \mid \beta(v) = t \}
\]
for $t \in T$.
Let $(U,\Nm)$ be another normed finite dimensional vector space over $\RR$, and let
$\phi : V \to U$ be a homomorphism over $\RR$. The norm $\Vert \phi\Vert$ of $\phi$ is defined to be
\[
\Vert \phi \Vert = \sup \{ \Vert \phi(v) \Vert \mid v \in V, \ \Vert v \Vert = 1 \}.
\]
First let us see the following lemma.

\begin{Lemma}
\label{norm:sub:quot:4:spaces}
Let $(V,\Nm)$ be a normed finite dimensional vector space over $\RR$.
Let $T \subseteq U \subseteq W \subseteq V$ be vector subspaces of $V$.
Then
\[
(\Nm_{W \hookrightarrow V})_{W \twoheadrightarrow W/U} = ((\Nm_{V \twoheadrightarrow V/T})_{W/T \hookrightarrow V/T})_{W/T \twoheadrightarrow W/U}
\]
holds on $W/U$.
\end{Lemma}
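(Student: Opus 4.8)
The plan is to unwind both sides into explicit infima of $\Nm$ over cosets in $V$ and to observe that the containment $T \subseteq U$ collapses them to the same quantity. Fix $w \in W$ and let $[w]$ denote its class in $W/U$; under the canonical isomorphism $(W/T)/(U/T) \cong W/U$ we also regard $[w]$ as an element of the source of the outermost quotient map on the right-hand side. Everything that follows is just a matter of applying the definitions of subnorm and quotient norm from the paragraph above, together with the fact (already recorded there) that these constructions do produce genuine norms.

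First I would evaluate the left-hand norm at $[w]$. Since the subnorm $\Nm_{W \hookrightarrow V}$ is merely the restriction of $\Nm$ to $W$, the definition of the quotient norm gives that this value is
\[
\inf \{ \Vert w' \Vert \mid w' \in W,\ w' - w \in U \} = \inf_{u \in U} \Vert w + u \Vert .
\]
Next I would evaluate the right-hand norm at $[w]$ by applying its three constituent operations in turn: the quotient norm on $V/T$ assigns to a class $v + T$ the number $\inf_{t \in T} \Vert v + t \Vert$; restricting to the subspace $W/T \subseteq V/T$ (which makes sense because $T \subseteq U \subseteq W$) does not alter this value on classes represented by elements of $W$; and passing to the quotient by $U/T \subseteq W/T$ then yields
\[
\inf_{u \in U} \Bigl( \inf_{t \in T} \Vert w + u + t \Vert \Bigr) = \inf_{u \in U,\, t \in T} \Vert w + u + t \Vert .
\]
Finally, because $T \subseteq U$, the map $(u,t) \mapsto u + t$ sends $U \times T$ into $U$, while taking $t = 0$ already realizes every element of $U$; hence this double infimum equals $\inf_{u \in U} \Vert w + u \Vert$, which is precisely the left-hand value. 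Since $w \in W$ was arbitrary, the two norms coincide on $W/U$.

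The computation itself is immediate; the only point needing care is the bookkeeping — fixing the canonical identification $(W/T)/(U/T) \cong W/U$ and keeping straight which ambient space each successive subnorm or quotient norm is taken with respect to. I do not expect any genuine obstacle: once the operations are correctly set up, the statement is a one-line manipulation of infima in which $T \subseteq U$ makes the inner infimum over $T$ redundant. This is a bookkeeping lemma meant to be invoked repeatedly in later sections, which is why it is stated in this somewhat elaborate nested form.
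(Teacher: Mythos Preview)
Your argument is correct: unwinding the definitions, both sides evaluate at the class of $w\in W$ to $\inf_{u\in U}\Vert w+u\Vert$, the inner infimum over $T$ on the right being absorbed because $T\subseteq U$. There is no gap.

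The paper proceeds differently. Rather than computing both sides directly, it invokes a compatibility lemma from \cite{MoCont} twice, via two commutative squares. First it applies that lemma to the square with corners $W$, $V$, $W/U$, $V/U$ to rewrite the left-hand side as $(\Nm_{V\twoheadrightarrow V/U})_{W/U\hookrightarrow V/U}$; then it applies it again to the square with corners $W/T$, $V/T$, $W/U$, $V/U$ to rewrite the right-hand side in the same form, using also that successive quotients $V\twoheadrightarrow V/T\twoheadrightarrow V/U$ compose to $V\twoheadrightarrow V/U$ on the level of quotient norms. Your approach is more elementary and self-contained; the paper's approach is more modular, treating the result as a formal consequence of two basic ``sub and quotient commute'' and ``quotients compose'' facts already on record. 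Either is perfectly adequate for this bookkeeping lemma.
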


\begin{proof}
Let us consider the following commutative diagram:
\[
\xymatrix{
W \ar@{^{(}->}[r] \ar@{->>}[d] & V \ar@{->>}[d] \\
W/U \ar@{^{(}->}[r] & V/U.
}
\]
Then, by \cite[(2) in Lemma~3.4]{MoCont}, we have
\[
(\Nm_{W \hookrightarrow V})_{W \twoheadrightarrow W/U} = (\Nm_{V \twoheadrightarrow V/U})_{W/U \hookrightarrow V/U}.
\]
Moreover, considering the following commutative diagram:
\[
\xymatrix{
W/T \ar@{^{(}->}[r] \ar@{->>}[d] & V/T \ar@{->>}[d] \\
W/U \ar@{^{(}->}[r] & V/U,
}
\]
if we set $\Nm' = \Nm_{V \twoheadrightarrow V/T}$, then
\[
(\Nm'_{V/T \twoheadrightarrow V/U})_{W/U \hookrightarrow V/U} =
(\Nm'_{W/T \hookrightarrow V/T})_{W/T \twoheadrightarrow W/U}.
\]
Thus the lemma follows because
$\Nm_{V \twoheadrightarrow V/U} = \Nm'_{V/T \twoheadrightarrow V/U}$
by  \cite[(1) in Lemma~3.4]{MoCont}.
\end{proof}

Let $M$ be a finitely generated $\ZZ$-module and $\Nm$ a norm of $M_{\RR} := M \otimes_{\ZZ} \RR$.
A pair $(M, \Nm)$ is called a {\em normed $\ZZ$-module}.
For a normed $\ZZ$-module $(M, \Nm)$, 
we define $\lambda_{\QQ}(M, \Nm)$ and $\lambda_{\ZZ}(M,\Nm)$ to be
\[
\hspace{-2.7em}
\lambda_{\QQ}(M,\Nm)  := \inf \left\{ \lambda \in \RR  \left|
\begin{array}{l}
\text{there are $e_1, \ldots, e_n \in M$ such that $e_1, \ldots, e_n $} \\
\text{form a basis over $\QQ$ and $\Vert e_i \Vert \leq \lambda$ for all $i$} 
\end{array}
\right\}\right. 
\]
and
\[
\lambda_{\ZZ}(M,\Nm) := \inf \left\{ \lambda \in \RR  \left|
\begin{array}{l}
\text{there are $e_1, \ldots, e_n \in M$ such that $e_1, \ldots, e_n$ form} \\
\text{a free $\ZZ$-basis of $M/M_{tor}$ and $\Vert e_i \Vert \leq \lambda$ for all $i$} 
\end{array}
\right\}\right..
\]
Note that if $M$ is a torsion module, then $\lambda_{\QQ}(M, \Nm) = \lambda_{\ZZ}(M,\Nm) = 0$.

\begin{Lemma}
\label{lem:lambda:lambda:prime}
$\lambda_{\QQ}(M,\Nm) \leq \lambda_{\ZZ}(M,\Nm) \leq \rank (M) \lambda_{\QQ}(M,\Nm)$.
\end{Lemma}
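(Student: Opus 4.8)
The plan is to prove the two inequalities separately, both by elementary linear algebra over $\QQ$ and $\ZZ$.

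\textbf{The inequality $\lambda_{\QQ}(M,\Nm) \leq \lambda_{\ZZ}(M,\Nm)$.}
First I would observe that any free $\ZZ$-basis $e_1,\dots,e_n$ of $M/M_{tor}$ is in particular a $\QQ$-basis of $M_{\RR}$ (more precisely, its image spans $M\otimes_\ZZ\QQ$ and has the right cardinality $n=\rank(M)$). Hence every $\lambda$ admissible in the definition of $\lambda_{\ZZ}(M,\Nm)$ is also admissible in the definition of $\lambda_{\QQ}(M,\Nm)$, so the infimum defining the latter is over a larger set and is therefore no larger. If $M$ is torsion both sides are $0$, so there is nothing to do.

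\textbf{The inequality $\lambda_{\ZZ}(M,\Nm) \leq \rank(M)\,\lambda_{\QQ}(M,\Nm)$.}
Write $n = \rank(M)$ and assume $n\geq 1$ (otherwise both sides vanish). Fix $\varepsilon>0$ and choose $e_1,\dots,e_n\in M$ forming a $\QQ$-basis with $\Vert e_i\Vert \leq \lambda_{\QQ}(M,\Nm)+\varepsilon$ for all $i$. The elements $e_1,\dots,e_n$ generate a finite-index subgroup of $M/M_{tor}$, but need not form a $\ZZ$-basis. The idea is to replace them by a genuine free $\ZZ$-basis $x_1,\dots,x_n$ of $M/M_{tor}$ each of which is an \emph{integer} linear combination $x_j = \sum_i a_{ij} e_i$ with $\sum_i |a_{ij}| \leq n$; then by the triangle inequality and homogeneity of the norm, $\Vert x_j\Vert \leq \sum_i |a_{ij}|\,\Vert e_i\Vert \leq n(\lambda_{\QQ}(M,\Nm)+\varepsilon)$, and letting $\varepsilon\to 0$ finishes the proof. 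To construct such $x_j$, consider the lattice $N := \ZZ e_1+\dots+\ZZ e_n \subseteq M/M_{tor}$ of full rank $n$ and its finite-index overlattice $M/M_{tor}$. Applying the theory of the Smith normal form (or the stacked-basis theorem) to $N\subseteq M/M_{tor}$, one finds a $\ZZ$-basis $f_1,\dots,f_n$ of $M/M_{tor}$ and positive integers $c_1\mid c_2\mid\cdots\mid c_n$ with $c_1 f_1,\dots,c_n f_n$ a $\ZZ$-basis of $N$. Hence each $f_j = \sum_i b_{ij}(c_i f_i) \cdot (\text{something})$ — more directly, writing the change-of-basis between $\{e_i\}$ and $\{c_i f_i\}$ as a matrix $U\in GL_n(\ZZ)$, we get $f_j = c_j^{-1}\sum_i (U^{-1})_{ij}(\text{...})$; cleaner is to note $e_i = \sum_k u_{ki} (c_k f_k)$ with $(u_{ki})\in GL_n(\ZZ)$, so $f_k = c_k^{-1}\sum_i (u^{-1})_{ik} e_i$ need not have integer coefficients.

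\textbf{The main obstacle, and how to get around it.}
The subtlety above is exactly the crux: a single $\QQ$-basis bounded by $\lambda$ does not directly yield a $\ZZ$-basis bounded by $\lambda$, and naive Smith-normal-form manipulations reintroduce denominators. The clean fix is the following greedy/exchange argument. Among all $\ZZ$-bases $(x_1,\dots,x_n)$ of $M/M_{tor}$, and for each index $j$ expressing $x_j=\sum_i a_{ij}e_i\in M_{\RR}$ with rational coefficients $a_{ij}$, I would pick the basis minimising $\sum_j \Vert x_j\Vert$ is the wrong target; instead, one builds the basis one vector at a time. Concretely: since $e_1,\dots,e_n$ span $M\otimes\QQ$, for each $k$ the primitive vector of $M/M_{tor}$ in the line $\QQ e_k$ is $e_k/m_k$ for some positive integer $m_k$; but these primitive vectors need not form a basis either. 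The honest route, which I would carry out in full, is: induct on $n$. The submodule $M' := (M/M_{tor})\cap(\QQ e_2+\dots+\QQ e_n)$ is saturated of rank $n-1$, so $(M/M_{tor})/M'$ is free of rank $1$; let $\bar e_1$ generate it and lift to $x_1\in M/M_{tor}$ — since the image of $e_1$ generates a finite-index subgroup of this rank-$1$ free group, we may take $x_1$ with $e_1 = t\, x_1$ in $(M/M_{tor})/M'$ for a positive integer $t$, and by subtracting a suitable element of $M'$ (an integer combination of the inductively chosen basis of $M'$) arrange $x_1 = \frac{1}{t}e_1 + (\text{integer combination of } e_2,\dots,e_n)$; a standard reduction bounds the integer coefficients so that the total $\ell^1$-norm of the coefficient vector is at most $n$. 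Running the induction on $M'$ with the $\QQ$-basis $e_2,\dots,e_n$ produces $x_2,\dots,x_n$, and $(x_1,\dots,x_n)$ is then a free $\ZZ$-basis of $M/M_{tor}$ with $\Vert x_j\Vert \leq n(\lambda_{\QQ}(M,\Nm)+\varepsilon)$ for every $j$. Taking the infimum over $\varepsilon$ gives $\lambda_{\ZZ}(M,\Nm)\leq n\,\lambda_{\QQ}(M,\Nm)$, as claimed. I expect the bookkeeping in this inductive coefficient-bounding step to be the only genuinely delicate part; everything else is formal.
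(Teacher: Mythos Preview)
The paper does not actually prove this lemma; it just cites \cite[Lemma~1.7]{ZhPS}. Your approach --- build a $\ZZ$-basis of $M/M_{tor}$ adapted to the flag $\QQ e_1 \supset \QQ e_2 + \cdots + \QQ e_n \supset \cdots$ and reduce coefficients --- is precisely the standard argument that appears there, so in spirit you are reproducing the cited proof. The first inequality is fine.

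For the second inequality your inductive scheme is correct in outline, but the key reduction step is mis-stated and, as written, does not give the bound $n\lambda_\QQ$. You cannot arrange $x_1 = \tfrac{1}{t}e_1 + (\text{integer combination of }e_2,\dots,e_n)$: a lift $x_1$ of a generator of $(M/M_{tor})/M'$ has the form $\tfrac{1}{t}e_1 + \sum_{i\ge 2} c_i e_i$ with $c_i\in\QQ$, and no modification by elements of $M'$ will make the $c_i$ integers in general. What you \emph{can} do is subtract $\sum_{i\ge 2}\lfloor c_i\rfloor e_i$ (note $e_2,\dots,e_n\in M'$, so this keeps $x_1$ a valid lift) to force each $c_i\in[0,1)$; then $\Vert x_1\Vert \le \tfrac{1}{t}\Vert e_1\Vert + \sum_{i\ge 2}\Vert e_i\Vert \le n(\lambda_\QQ+\varepsilon)$, which is exactly what you need. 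By contrast, reducing with respect to the inductively constructed basis $x_2,\dots,x_n$ of $M'$, as you propose, only gives $\Vert x_1\Vert \le \Vert e_1\Vert + (n-1)\max_j\Vert x_j\Vert \le (1+(n-1)^2)(\lambda_\QQ+\varepsilon)$, which is too weak. With this one correction (reduce using the $e_i$, not the $x_j$, and aim for fractional coefficients in $[0,1)$ rather than integer ones), your argument goes through cleanly.
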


\begin{proof}
See \cite[Lemma~1.7 and its consequence]{ZhPS}.
\end{proof}

\begin{Lemma}
\label{lem:iso:Q:comp:lambda}
Let $(M_1,\Nm_1)$ and $(M_2, \Nm_2)$ be normed $\ZZ$-modules, and let
$\phi : M_1 \to M_2$ be a homomorphism such that $\phi$ yields an isomorphism over $\QQ$.
Then we have the following:
\begin{enumerate}
\renewcommand{\labelenumi}{\rom{(\arabic{enumi})}}
\item
$\lambda_{\QQ}(M_2,\Nm_2) \leq \Vert \phi \Vert \lambda_{\QQ}(M_1, \Nm_1)$.

\item
Further we assume that $\phi$ is surjective and that $\phi$ induces an isometry 
\[
((M_{1})_{\RR}, \Nm_1) \overset{\sim}{\longrightarrow} ((M_{2})_{\RR}, \Nm_2).
\]
Then $\lambda_{\QQ}(M_2,\Nm_2) = \lambda_{\QQ}(M_1, \Nm_1)$.
\end{enumerate}
\end{Lemma}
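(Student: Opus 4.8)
The plan is to prove both parts by transporting a nearly optimal $\QQ$-basis across $\phi$ (for part (1)) or across a set-theoretic section of $\phi$ (for part (2)), using nothing more than the definition of $\lambda_{\QQ}$, the operator-norm inequality $\Vert \phi(v) \Vert \leq \Vert \phi \Vert \, \Vert v \Vert$, and the elementary fact that an $\RR$-homomorphism which is a $\QQ$-isomorphism carries $\QQ$-bases to $\QQ$-bases.

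For (1), I would fix $\lambda > \lambda_{\QQ}(M_1, \Nm_1)$ and choose $e_1, \dots, e_n \in M_1$ forming a basis over $\QQ$ with $\Vert e_i \Vert_1 \leq \lambda$ for all $i$. Since $\phi$ maps $M_1$ into $M_2$ and $\phi_{\QQ}$ is an isomorphism, the elements $\phi(e_1), \dots, \phi(e_n) \in M_2$ again form a basis over $\QQ$, and $\Vert \phi(e_i) \Vert_2 \leq \Vert \phi \Vert \, \Vert e_i \Vert_1 \leq \Vert \phi \Vert \lambda$. Hence $\lambda_{\QQ}(M_2, \Nm_2) \leq \Vert \phi \Vert \lambda$, and letting $\lambda$ tend to $\lambda_{\QQ}(M_1, \Nm_1)$ yields the claimed inequality.

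For (2), note first that the isometry hypothesis forces $\Vert \phi \Vert = 1$ when $(M_1)_{\RR} \neq 0$ (the remaining case, where $M_1$ and hence $M_2$ is torsion, is trivial since both quantities are then $0$), so part (1) already gives $\lambda_{\QQ}(M_2, \Nm_2) \leq \lambda_{\QQ}(M_1, \Nm_1)$. For the reverse inequality I would fix $\lambda > \lambda_{\QQ}(M_2, \Nm_2)$, choose a $\QQ$-basis $f_1, \dots, f_n \in M_2$ with $\Vert f_i \Vert_2 \leq \lambda$, and then use the surjectivity of $\phi$ to pick $e_i \in M_1$ with $\phi(e_i) = f_i$. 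Because $\phi_{\QQ}$ is an isomorphism, $e_1, \dots, e_n$ form a basis of $(M_1)_{\QQ}$, and since $\phi$ induces an isometry we get $\Vert e_i \Vert_1 = \Vert \phi(e_i) \Vert_2 = \Vert f_i \Vert_2 \leq \lambda$. Thus $\lambda_{\QQ}(M_1, \Nm_1) \leq \lambda$, and letting $\lambda$ tend to $\lambda_{\QQ}(M_2, \Nm_2)$ completes the proof.

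The argument is essentially formal, so there is no serious obstacle; the one point deserving attention is that the surjectivity assumption in (2) is genuinely needed. Without it, an element $f_i$ of a $\QQ$-basis of $M_2$ might only have a $\ZZ$-multiple $N f_i$ in the image of $\phi$, and the corresponding lift in $M_1$ would have norm $N \Vert f_i \Vert_2$, destroying the bound; everything else reduces to the definition of $\lambda_{\QQ}$ together with the operator-norm estimate.
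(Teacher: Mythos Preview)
Your proof is correct and follows essentially the same route as the paper: transport a $\QQ$-basis through $\phi$ for (1), and for (2) use (1) together with a lift via surjectivity and the isometry to get the reverse inequality. The only cosmetic difference is that the paper assumes the infimum defining $\lambda_{\QQ}$ is attained and works with an exact minimizing basis, whereas you approximate by taking $\lambda > \lambda_{\QQ}$ and passing to the limit; your version is slightly more careful on this point.
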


\begin{proof}
(1) Let $e_1, \ldots, e_n \in M_1$ such that
$e_1, \ldots, e_n$ form a basis of $M_1$ over $\QQ$ and
\[
\max \{ \Vert e_1 \Vert_1, \ldots, \Vert e_n \Vert_1 \} = \lambda_{\QQ}(M_1, \Nm_1).
\]
Then  $\phi(e_1), \ldots, \phi(e_n)$ form a basis of $M_2$ over $\QQ$ and
\[
\Vert \phi(e_i) \Vert_2 \leq \Vert \phi \Vert \Vert e_i \Vert_1 \leq  \Vert \phi \Vert \lambda_{\QQ}(M_1, \Nm_1)
\]
for all $i$. Thus we have the assertion.

(2) First of all, by (1),
$\lambda_{\QQ}(M_2,\Nm_2) \leq \lambda_{\QQ}(M_1, \Nm_1)$.
Let $y_1, \ldots, y_n \in M_2$ such that
$y_1, \ldots, y_n$ form a basis of $M_2$ over $\QQ$ and
\[
\max \{ \Vert y_1 \Vert_2, \ldots, \Vert y_n \Vert_2 \} = \lambda_{\QQ}(M_2, \Nm_2).
\]
For each $i$, we choose $x_i \in M_1$ with $\phi(x_i) = y_i$.
Then $\Vert x_i \Vert_1 = \Vert y_i \Vert_2$ for all $i$.
Thus $\lambda_{\QQ}(M_1,\Nm_1) \leq \lambda_{\QQ}(M_2, \Nm_2)$.
\end{proof}

\begin{Proposition}
\label{prop:estimate:lambda:chain}
Let $(M_1,\Nm_1), \ldots, (M_n, \Nm_n)$ be normed $\ZZ$-modules.
For each $i$ with $2 \leq i \leq n$, let $\alpha_i : M_{i-1} \to M_i$ be a homomorphism such that
$\alpha_i$ gives rise to an injective homomorphism over $\QQ$.
We set \
\[
\phi_i = \alpha_n \circ \cdots \circ \alpha_{i+1} : M_i \to M_n
\]
for $i=i,\ldots,n-1$, and
\[
Q_i = \begin{cases}
\Coker(\alpha_i : M_{i-1} \to M_i) & \text{if $i \geq 2$}, \\
M_1 & \text{if $i=1$}
\end{cases}
\]
for $i=1,\ldots,n$. Then we have
\[
\lambda_{\QQ}(M_n, \Nm_n) \leq
\lambda_{\QQ}(Q_n, \Nm_{n, M_n \twoheadrightarrow Q_n}) + \sum_{i=1}^{n-1} \Vert \phi_i \Vert
\lambda_{\QQ}(Q_i, \Nm_{i, M_i \twoheadrightarrow Q_i}) \rank Q_i.
\]
\end{Proposition}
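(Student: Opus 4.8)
The plan is to construct, for each $\epsilon>0$, a family $y_{1,1},\dots,y_{n,r_n}$ of elements of $M_n$ forming a $\QQ$-basis of $M_n\otimes_{\ZZ}\QQ$ whose norms are all bounded by the right-hand side of the asserted inequality plus a quantity tending to $0$ with $\epsilon$; letting $\epsilon\to0$ then bounds $\lambda_{\QQ}(M_n,\Nm_n)$. Write $r_i=\rank Q_i$ and $\lambda_i=\lambda_{\QQ}(Q_i,\Nm_{i,M_i\twoheadrightarrow Q_i})$ (so $\lambda_1=\lambda_{\QQ}(M_1,\Nm_1)$), set $\phi_n=\mathrm{id}_{M_n}$, and let $N_i\subseteq M_n\otimes_{\ZZ}\QQ$ be the $\QQ$-subspace spanned by $\phi_i(M_i)$. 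Since every $\alpha_j$, hence every $\phi_i$, is injective over $\QQ$ and $\phi_i\circ\alpha_i=\phi_{i-1}$, these give a filtration $0=N_0\subseteq N_1\subseteq\cdots\subseteq N_n=M_n\otimes_{\ZZ}\QQ$ in which $\phi_i$ induces an isomorphism $Q_i\otimes_{\ZZ}\QQ\cong N_i/N_{i-1}$ for each $i$ (with $Q_1=M_1$); I will build a basis adapted to this filtration. (The case $n=1$ is vacuous, so assume $n\ge 2$.)

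First I would fix, for each $i$, a basis $\bar q_{i,1},\dots,\bar q_{i,r_i}$ of $Q_i\otimes_{\ZZ}\QQ$ consisting of elements of $Q_i$ with $\Vert\bar q_{i,j}\Vert_{i,M_i\twoheadrightarrow Q_i}\le\lambda_i+\epsilon$ (possible by definition of $\lambda_{\QQ}$), choose integral lifts $g_{i,j}\in M_i$ of the $\bar q_{i,j}$, and — since the quotient norm is an infimum over preimages — real lifts $\tilde q_{i,j}\in M_i\otimes_{\ZZ}\RR$ of $\bar q_{i,j}$ with $\Vert\tilde q_{i,j}\Vert_i\le\lambda_i+2\epsilon$. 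Put $x_{i,j}=\phi_i(\tilde q_{i,j})\in N_i\otimes_{\ZZ}\RR$. This ``real representative'' has $\Vert x_{i,j}\Vert_n\le\Vert\phi_i\Vert(\lambda_i+2\epsilon)$ and image $\bar q_{i,j}$ in $(N_i/N_{i-1})\otimes_{\ZZ}\RR$ under the isomorphism above. For $i=1$ one may take $\tilde q_{1,j}=g_{1,j}=\bar q_{1,j}$, so $x_{1,j}=\phi_1(g_{1,j})$ already lies in $M_n$; set $y_{1,j}=x_{1,j}$.

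The heart of the argument — and the step I expect to be the main obstacle — is to turn the higher-level real representatives into integral vectors of $M_n$ without letting the rounding errors compound. Proceed by induction on $i$, assuming the $y_{k,l}$ with $k<i$ are defined, form a $\QQ$-basis of $N_{i-1}$, and represent $\bar q_{k,l}$ in $(N_k/N_{k-1})\otimes_{\ZZ}\QQ$. Since both $\phi_i(g_{i,j})\in M_n$ and $x_{i,j}$ represent $\bar q_{i,j}$, their difference $u=\phi_i(g_{i,j})-x_{i,j}$ lies in $N_{i-1}\otimes_{\ZZ}\RR$. Now run a cascade from $k=i-1$ down to $k=1$: the current defect lies in $N_k\otimes_{\ZZ}\RR$; express its class in $(N_k/N_{k-1})\otimes_{\ZZ}\RR$ in the basis $\bar q_{k,1},\dots,\bar q_{k,r_k}$, round each coefficient $d_l$ to a nearest integer $\lfloor d_l\rceil$, and subtract $\sum_l\lfloor d_l\rceil\,y_{k,l}$; this removes the class and leaves a ``clean error'' $e^{(k)}=\sum_l(d_l-\lfloor d_l\rceil)\,x_{k,l}$ with $\Vert e^{(k)}\Vert_n\le\tfrac{r_k}{2}\Vert\phi_k\Vert(\lambda_k+2\epsilon)$, together with a new defect that lies in $N_{k-1}\otimes_{\ZZ}\RR$ — its image in $(N_k/N_{k-1})\otimes_{\ZZ}\RR$ cancels because $x_{k,l}$ and $y_{k,l}$ have the same image there — which is passed to the next step. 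Setting $y_{i,j}=\phi_i(g_{i,j})$ minus the total integral combination $\sum_{k<i}\sum_l\lfloor d_l^{(k)}\rceil\,y_{k,l}$ produced by the cascade yields an element of $M_n$, and telescoping the cascade gives $y_{i,j}=x_{i,j}+\sum_{k=1}^{i-1}e^{(k)}$, so that
\[
\Vert y_{i,j}\Vert_n\ \le\ \Vert\phi_i\Vert(\lambda_i+2\epsilon)+\sum_{k=1}^{i-1}\tfrac{r_k}{2}\Vert\phi_k\Vert(\lambda_k+2\epsilon);
\]
moreover $y_{i,j}$ still represents $\bar q_{i,j}$ in $(N_i/N_{i-1})\otimes_{\ZZ}\QQ$ because only elements of $N_{i-1}\otimes_{\ZZ}\QQ$ were subtracted, hence $\{y_{k,l}:k\le i\}$ is a $\QQ$-basis of $N_i$.

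Applying this with $i=n$ (where $\Vert\phi_n\Vert=1$ and $Q_n=\Coker(\alpha_n)$), the family $\{y_{i,j}:1\le i\le n,\ 1\le j\le r_i\}$ is a $\QQ$-basis of $M_n\otimes_{\ZZ}\QQ$ contained in $M_n$, and since $\tfrac{r_k}{2}\le r_k$ and $\Vert\phi_i\Vert\lambda_i\le r_i\Vert\phi_i\Vert\lambda_i$, the maximum of the $\Vert y_{i,j}\Vert_n$ is at most $\lambda_n+\sum_{i=1}^{n-1}\Vert\phi_i\Vert\lambda_i r_i$ up to an $O(\epsilon)$ term; letting $\epsilon\to0$ gives precisely the stated inequality. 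The reason the naive ``lift-and-round level by level'' proof fails is that integralizing a level-$i$ vector by rounding against the already-corrected vectors $y_{k,l}$ would incur errors $\tfrac{r_k}{2}\Vert y_{k,l}\Vert_n$, and the norms $\Vert y_{k,l}\Vert_n$ themselves carry the errors of all lower levels, producing blow-up factors $r_1r_2\cdots$; the cascade circumvents this by rounding only against the fixed real representatives $x_{k,l}$, which obey the clean estimate $\Vert x_{k,l}\Vert_n\le\Vert\phi_k\Vert\lambda_k+O(\epsilon)$, and by moving the discrepancy $x_{k,l}-y_{k,l}$ down just one level at a time.
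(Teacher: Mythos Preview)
Your argument is correct. The cascade cleanly separates the bounded ``clean errors'' $e^{(k)}$ (rounded against the real representatives $x_{k,l}$, whose norms are controlled by $\Vert\phi_k\Vert\lambda_k$) from the unbounded but ultimately vanishing ``defects'' $u^{(k)}$; the telescoping identity $y_{i,j}=x_{i,j}+\sum_{k<i}e^{(k)}$ is exactly right, and the final bound follows.

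The paper organizes the same underlying idea differently. It first reduces, via the operator-norm inequality $\Nm'_i\le\Vert\phi_i\Vert\,\Nm_i$, to the case where each $\alpha_i$ is an inclusion and $\Nm_i$ is the subnorm inherited from $\Nm_n$; then it proves the case $n=2$ by a single lift-and-round step (your cascade at one level), and handles general $n$ by quotienting the whole chain by $M_1$, applying the induction hypothesis to $M_2/M_1\hookrightarrow\cdots\hookrightarrow M_n/M_1$, and then applying the $n=2$ case to $M_1\hookrightarrow M_n$. The inductive route needs Lemma~\ref{norm:sub:quot:4:spaces} to check that the quotient norms on $Q_i$ are unchanged after passing to $M_i/M_1$. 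Your direct construction avoids both the reduction step and that lemma, at the cost of the slightly more delicate bookkeeping in the cascade; it also yields the marginally sharper constant $\tfrac{r_k}{2}$ in place of $r_k$ at intermediate levels, though this is not needed for the application.
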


\begin{proof}
The proof of this proposition can be found in \cite[Lemma~5.1]{ZhPS}.
For reader's convenience, we reprove it here.

Let $\Nm'_i = \Nm_{n,(M_i)_{\RR} \hookrightarrow (M_n)_{\RR}}$, that is,
the sub-norm induced by the injective homomorphism
$\phi_i : (M_i)_{\RR} \to (M_n)_{\RR}$ and the norm $\Nm_n$ of $(M_n)_{\RR}$.
First let us see the following claim.

\begin{Claim}
\label{claim:prop:estimate:lambda:chain:1}
$\lambda_{\QQ}(Q_i, \Nm'_{i, M_i  \twoheadrightarrow Q_i})
\leq \Vert  \phi_i \Vert
\lambda_{\QQ}(Q_i, \Nm_{i, M_i  \twoheadrightarrow Q_i})$.
\end{Claim}

By the definition of
$\Vert  \phi_i \Vert$,
for $x \in (M_i)_{\RR}$,
\[
\Vert x \Vert_i \Vert  \phi_i \Vert \geq \Vert \phi_i(x) \Vert = \Vert x \Vert'_i.
\]
Thus, for $y \in (Q_i)_{\RR}$,
\[
\Vert y \Vert_{i,M_i  \twoheadrightarrow Q_i} \Vert  \phi_i \Vert 
\geq \Vert y \Vert'_{i,M_i  \twoheadrightarrow Q_i},
\]
which shows the inequality of the claim.
\CQED

By Claim~\ref{claim:prop:estimate:lambda:chain:1},
(2) in Lemma~\ref{lem:iso:Q:comp:lambda} and replacing
$M_i$ with $\phi_i(M_i)$, we may assume that
$\alpha_i : M_{i-1} \hookrightarrow M_i$ is an inclusion map and
$\Nm_i = \Nm_{n, M_i \hookrightarrow M_n}$.

\begin{Claim}
\label{claim:prop:estimate:lambda:chain:2}
The assertion holds in the case $n=2$, that is,
\[
\lambda_{\QQ}(M_2,\Nm_2) \leq \lambda_{\QQ}(Q_2,\Nm_{2,M_2 \twoheadrightarrow Q_2}) + \lambda_{\QQ}(M_1,\Nm_1) \rank M_1.
\]
\end{Claim}

Let $e_1, \ldots, e_s \in M_1$ and $f_1, \ldots, f_t \in Q_2$ such that
$e_1, \ldots, e_s$ and $f_1, \ldots, f_t$ form bases  of $M_1$ and $Q_2$ over $\QQ$ respectively, and that
\[
\begin{cases}
\lambda_{\QQ}(M_1,\Nm_1) = \max \{ \Vert e_1 \Vert_1, \ldots, \Vert e_s \Vert_1 \}, \\
\lambda_{\QQ}(Q_2,\Nm_{2,M_2 \twoheadrightarrow Q_2}) =
\max \{ \Vert f_{1}\Vert_{2,M_2 \twoheadrightarrow Q_2}, \ldots, \Vert f_t \Vert_{2,M_2 \twoheadrightarrow Q_2} \}.
\end{cases}
\]
Let us choose $f'_j \in M_2$ and $f''_j \in (M_2)_{\RR}$ such that
$f'_j = f_j$ on $Q_2$, $f''_j = f_j$ on $(Q_2)_{\RR}$ and that $\Vert f''_j \Vert_2 = \Vert f_{j}\Vert_{2,M_2 \twoheadrightarrow Q_2}$.
Since $f'_j \otimes 1 - f''_j \in (M_1)_{\RR}$,
there are $a_{ji} \in \RR$ such that
\[
f'_j \otimes 1 - f''_j  = \sum_{i} a_{ji} (e_i \otimes 1).
\]
We set $g_j = f'_j - \sum_i \lfloor a_{ji} \rfloor e_i$.
Then $e_1, \ldots, e_s, g_1, \ldots, g_t \in M_2$
form a basis of $M_2$ over $\QQ$.
Moreover, as 
\[
g_j \otimes 1 = f''_j + \sum_i (a_{ji} - \lfloor a_{ji} \rfloor)(e_i \otimes 1),
\]
we have
\[
\Vert g_j \Vert_2 \leq \lambda_{\QQ}(Q_2,\Nm_{2,M_2 \twoheadrightarrow Q_2}) + \lambda_{\QQ}(M_1,\Nm_1) \rank M_1 ,
\]
which implies the claim.
\CQED

We assume $n \geq 3$.
We set $M'_i = M_{i}/M_1$ for $i=2, \ldots, n$ and the norm $\Nm'_i$ of $M'_i$ is given by
$\Nm'_i = \Nm_{i, M_{i} \twoheadrightarrow M'_i}$.
Note that 
\[
\Nm'_i = (\Nm_{n, M_n \twoheadrightarrow M'_{n}})_{M'_{i} \hookrightarrow M'_{n}}
\]
by
\cite[(2) in Lemma~3.4]{MoCont}.
Applying the induction hypothesis to
\[
(M'_2, \Nm'_2) \hookrightarrow \cdots  \hookrightarrow (M'_{n}, \Nm'_{n}),
\]
we obtain
\[
\lambda_{\QQ}(M'_{n}, \Nm'_{n}) \leq
\lambda_{\QQ}(Q_n, \Nm'_{n, M'_n \twoheadrightarrow Q_n}) + \sum_{i=2}^{n-1}
\lambda_{\QQ}(Q_i, \Nm'_{i, M'_i \twoheadrightarrow Q_i}) \rank Q_i.
\]
Using Lemma~\ref{norm:sub:quot:4:spaces} in the case where 
\[
M_1 \subseteq M_{i-1} \subseteq M_i \subseteq M_n,
\]
we have $\Nm'_{i, M'_i \twoheadrightarrow Q_i} = \Nm_{i, M_i \twoheadrightarrow Q_i}$.
Therefore, the above inequality means
\addtocounter{Claim}{1}
\begin{equation}
\label{eqn:prop:estimate:lambda:chain:1}
\lambda_{\QQ}(M'_{n}, \Nm'_{n}) \leq
\lambda_{\QQ}(Q_n, \Nm_{n, M_n \twoheadrightarrow Q_n}) + \sum_{i=2}^{n-1}
\lambda_{\QQ}(Q_i, \Nm_{i, M_i \twoheadrightarrow Q_i}) \rank Q_i.
\end{equation}
On the other hand, applying Claim~\ref{claim:prop:estimate:lambda:chain:2} 
to the case where $(M_1, \Nm_1) \hookrightarrow (M_n, \Nm_n)$,
we can see
\addtocounter{Claim}{1}
\begin{equation}
\label{eqn:prop:estimate:lambda:chain:2}
\lambda_{\QQ}(M_n,\Nm_n) \leq \lambda_{\QQ}(M'_n,\Nm'_{n}) + \lambda_{\QQ}(M_1,\Nm_1) \rank M_1,
\end{equation}
so that we obtain the assertion combing \eqref{eqn:prop:estimate:lambda:chain:1} with
\eqref{eqn:prop:estimate:lambda:chain:2}.
\end{proof}

\section{Normed graded ring}
Let $k$ be a commutative ring with unity and 
$R = \bigoplus_{n=0}^{\infty}  R_n$ a graded ring over $k$.
Let $M$ be a $R$-module and $h$ a positive integer.
We say $M$ is a {\em $h$-graded $R$-module} if
$M$ has a decomposition $M = \bigoplus_{n=-\infty}^{\infty} M_n$ as $k$-modules and
\[
x \in R_n,\ m \in M_{n'} \quad\Longrightarrow\quad x \cdot m \in M_{hn + n'}
\]
holds for all $n \in \ZZ_{\geq 0}$ and $n' \in \ZZ$.
For example, if we set $R^{(h)} = \bigoplus_{n=0}^{\infty} R_{nh}$, then
$R$ is a $h$-graded $R^{(h)}$-module.
Form now on, we assume that $k = \ZZ$ and
$R_n$ (resp. $M_n$) is a finitely generated $\ZZ$-module 
for all $n \in \ZZ_{\geq 0}$ (resp. $n \in \ZZ$).
Let $\KK$ be either $\QQ$ or $\RR$.
We set $R_{\KK} = R \otimes_{\ZZ} \KK$ and $M_{\KK} = M \otimes_{\ZZ} \KK$. Then
\[
R_{\KK} = \bigoplus_{n=0}^{\infty}  (R_n)_{\KK}\quad\text{and}\quad
M_{\KK} = \bigoplus_{n=-\infty}^{\infty}  (M_n)_{\KK},
\]
where $(R_n)_{\KK} = R_n \otimes_{\ZZ} \KK$ and
$(M_n)_{\KK} = M_n \otimes_{\ZZ} \KK$.
Note that $R_{\KK}$ is a graded ring over $\KK$ and
$M_{\KK}$ is a $h$-graded $R_{\KK}$-module.
We say 
\[
(R,\Nm) = \bigoplus_{n=0}^{\infty}  (R_n,\Nm_n)
\]
is a {\em normed graded ring over $\ZZ$}
if 
\begin{enumerate}
\renewcommand{\labelenumi}{\rom{(\arabic{enumi})}}
\item
$\Nm_n$ is a norm of $(R_{n})_{\RR}$ for each $n \in \ZZ_{\geq 0}$, and
\item
$\Vert s \cdot s' \Vert_{n+n'} \leq \Vert s \Vert_{n} \Vert s' \Vert_{n'}$ holds for all $s \in (R_{n})_{\RR}$ and $s' \in (R_{n'})_{\RR}$.
\end{enumerate}
Similarly, 
\[
(M,\Nm_M) = \bigoplus_{n=-\infty}^{\infty}  (M_n,\Nm_{M_n})
\]
is called a {\em normed $h$-graded $(R,\Nm)$-module}
if 
\begin{enumerate}
\renewcommand{\labelenumi}{\rom{(\arabic{enumi})'}}
\item
$\Nm_{M_n}$ is a norm of $(M_{n})_{\RR}$ for each $n \in \ZZ$, and
\item
$\Vert s \cdot m \Vert_{M_{hn+n'}} \leq \Vert s \Vert_{n} \Vert m \Vert_{M_{n'}}$ holds for all $s \in (R_{n})_{\RR}$ and $m \in (M_{n'})_{\RR}$.
\end{enumerate}

\begin{Proposition}
\label{prop:normed:graded:structure:descent}
Let $I$ be a homogeneous ideal of $R$ and $R' = R/I$.
Let $f : M \to Q$ be a surjective homomorphism of $h$-graded $R$-modules of degree $0$, that is,
$f(M_n) = Q_n$ for all $n \in \ZZ$.
We set 
\[
(R',\Nm') = \bigoplus_{n=0}^{\infty}  (R'_n,\Nm'_{n})\quad\text{and}\quad
(Q,\Nm_Q) = \bigoplus_{n=-\infty}^{\infty}  (Q_n,\Nm_{Q_n}),
\]
where
$\Nm'_n = \Nm_{n,R_n \twoheadrightarrow R'_n}$ and
$\Nm_{Q_n} = \Nm_{M_n, M_n \twoheadrightarrow Q_n}$.
Then we have the following:
\begin{enumerate}
\renewcommand{\labelenumi}{\rom{(\arabic{enumi})}}
\item
$(R',\Nm')$ is a normed graded ring over $\ZZ$.

\item
If $I \cdot Q = 0$, then $(Q,\Nm_Q)$ is naturally a normed $h$-graded $(R',\Nm')$-module.
\end{enumerate}
\end{Proposition}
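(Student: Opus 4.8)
The plan is to read everything off directly from the infimum definition of the quotient norm, the whole content being that an infimum of products is bounded by the product of infima as soon as one can lift the two factors simultaneously to the source. Begin with (1). Since $I$ is homogeneous, $R'_n = R_n/I_n$ with $I_n = I \cap R_n$, so each $R'_n$ is a finitely generated $\ZZ$-module; and because $\RR$ is flat over $\ZZ$, applying $-\otimes_{\ZZ}\RR$ to $0 \to I_n \to R_n \to R'_n \to 0$ gives $(R'_n)_{\RR} = (R_n)_{\RR}/(I_n)_{\RR}$, with $(I_n)_{\RR}$ a subspace that is automatically closed in finite dimension, so $\Nm'_n = \Nm_{n, R_n \twoheadrightarrow R'_n}$ is a genuine norm rather than merely a seminorm. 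Thus condition (1) of the definition of a normed graded ring holds, and it remains to check (2), the submultiplicativity.

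For that, fix $s' \in (R'_n)_{\RR}$, $t' \in (R'_{n'})_{\RR}$ and $\varepsilon > 0$. By definition of the quotient norm choose lifts $s \in (R_n)_{\RR}$ of $s'$ and $t \in (R_{n'})_{\RR}$ of $t'$ with $\Vert s \Vert_n \le \Vert s' \Vert'_n + \varepsilon$ and $\Vert t \Vert_{n'} \le \Vert t' \Vert'_{n'} + \varepsilon$. Since $R_{\RR} \to R'_{\RR}$ is a ring homomorphism, $s\cdot t$ is a lift of $s'\cdot t'$, hence
\[
\Vert s' \cdot t' \Vert'_{n+n'} \le \Vert s\cdot t \Vert_{n+n'} \le \Vert s \Vert_n \Vert t \Vert_{n'} \le (\Vert s'\Vert'_n + \varepsilon)(\Vert t'\Vert'_{n'} + \varepsilon),
\]
and letting $\varepsilon \to 0$ yields $\Vert s'\cdot t'\Vert'_{n+n'} \le \Vert s'\Vert'_n\,\Vert t'\Vert'_{n'}$, which is (1).

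For (2), first record that $Q$ does carry a natural $R'$-module structure: $f$ is a surjection of $h$-graded $R$-modules and $I\cdot Q = 0$, so the $R$-action on $Q$ annihilates $I$ and therefore factors through $R' = R/I$; this factored action is the one meant in the statement. As in (1), $Q_n = M_n/K_n$ with $K_n = \Ker(f_n)$ is finitely generated over $\ZZ$, and $\Nm_{Q_n} = \Nm_{M_n, M_n\twoheadrightarrow Q_n}$ is a norm on $(Q_n)_{\RR} = (M_n)_{\RR}/(K_n)_{\RR}$, so (1)$'$ holds. For (2)$'$, fix $s' \in (R'_n)_{\RR}$, $m \in (Q_{n'})_{\RR}$, $\varepsilon > 0$; choose a lift $s \in (R_n)_{\RR}$ of $s'$ with $\Vert s\Vert_n \le \Vert s'\Vert'_n + \varepsilon$ and a lift $\widetilde m \in (M_{n'})_{\RR}$ of $m$ with $\Vert \widetilde m\Vert_{M_{n'}} \le \Vert m\Vert_{Q_{n'}} + \varepsilon$. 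Because $f$ is a homomorphism of $h$-graded $R$-modules of degree $0$, the element $s\cdot\widetilde m \in (M_{hn+n'})_{\RR}$ satisfies $f(s\cdot\widetilde m) = s\cdot f(\widetilde m) = s\cdot m = s'\cdot m$, the last equality being the definition of the factored $R'$-action; so $s\cdot\widetilde m$ is a lift of $s'\cdot m$ and
\[
\Vert s'\cdot m\Vert_{Q_{hn+n'}} \le \Vert s\cdot\widetilde m\Vert_{M_{hn+n'}} \le \Vert s\Vert_n\Vert\widetilde m\Vert_{M_{n'}} \le (\Vert s'\Vert'_n + \varepsilon)(\Vert m\Vert_{Q_{n'}} + \varepsilon),
\]
and $\varepsilon\to 0$ finishes (2)$'$.

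There is no genuine obstacle: the argument is a routine $\varepsilon$-lifting. The only two points deserving attention are, first, that one must pass to honest lifts in $R$ (respectively in $M$) and multiply there before projecting, rather than manipulating the quotient seminorms formally; and second, in part (2), that the hypothesis $I\cdot Q = 0$ is precisely what is needed to make the $R'$-action on $Q$ well defined, so that the assertion is even meaningful — once that is in place, $f$ being a degree-$0$ $R$-module map transports the submultiplicativity of $\Nm_M$ over $\Nm$ to that of $\Nm_Q$ over $\Nm'$ verbatim.
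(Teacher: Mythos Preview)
Your proof is correct and follows essentially the same approach as the paper: lift to $R$ (respectively $M$), use the submultiplicativity there, and push back down via the quotient norm. The only cosmetic difference is that the paper picks lifts $x,y$ achieving the infimum exactly (legitimate since in finite dimensions the quotient norm's infimum is attained on a closed subspace), whereas you use an $\varepsilon$-approximation and let $\varepsilon\to 0$; both are fine, and your version is slightly more robust while the paper's is slightly shorter.
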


\begin{proof}
(1) We need to see that
\[
\Vert x' \cdot y' \Vert'_{n+n'} \leq \Vert x' \Vert'_n \Vert y' \Vert'_{n'}
\]
for all $x' \in (R'_{n})_{\RR}$ and $y' \in (R'_{n'})_{\RR}$. 
Indeed, we choose $x \in (R_{n})_{\RR}$ and $y \in (R_{n'})_{\RR}$ such that the classes of
$x$ and  $y$ in $R'_{\RR}$ are $x'$ and $y'$ respectively and that
$\Vert x \Vert_n = \Vert x' \Vert'_{n}$ and
$\Vert y \Vert_n = \Vert y' \Vert'_{n'}$.
Then, as  the class of $x \cdot y$ in $R'_{\RR}$ is $x' \cdot y'$,
\[
\Vert x' \cdot y' \Vert'_{n+n'} \leq \Vert x \cdot y \Vert_{n+n'} \leq
\Vert x \Vert_{n} \Vert y \Vert_{n'} =  \Vert x' \Vert'_{n} \Vert y' \Vert'_{n'}.
\]

(2) It is sufficient to show that
\[
\Vert x' \cdot q \Vert_{Q_{hn+n'}} \leq \Vert x' \Vert'_n \Vert q \Vert_{Q_{n'}}
\]
for all $x' \in (R'_{n})_{\RR}$ and $q \in (Q_{n'})_{\RR}$, which can be checked in the same way as in (1).
\end{proof}

Next let us observe the following lemma:

\begin{Lemma}
\label{lem:asym:R:asym:M}
We assume the following:
\begin{enumerate}
\renewcommand{\labelenumi}{\rom{(\arabic{enumi})}}
\item
$M_{\QQ}$ is a finitely generated $R_{\QQ}$-module, and
$M_n = \{ 0 \}$ for $n < 0$.

\item
There are $A,e,\upsilon \in \RR_{> 0}$ 
such that
$\lambda_{\QQ}(R_n,\Nm_n) \leq A n^e \upsilon^n$
for all $n \geq 1$.
\end{enumerate}
Then there is $A' \in \RR_{> 0}$ such that
$\lambda_{\QQ}(M_n,\Nm_{M_n}) \leq A' n^e \upsilon^{n/h}$
for all $n \geq 1$.
\end{Lemma}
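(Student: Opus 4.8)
The plan is to realise each $M_n$ as the image, under a homomorphism of controlled operator norm, of a direct sum of graded pieces $R_k$ of $R$ with $k\approx n/h$, and then to feed hypothesis~(2) into that picture. (If $M_{\QQ}=0$ every $M_n$ is torsion and the assertion is trivial, so assume $M_{\QQ}\neq 0$.)

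First I would fix the algebra. Since $M_{\QQ}$ is a finitely generated graded $R_{\QQ}$-module, choose finitely many homogeneous generators and, after clearing denominators and discarding zeros, regard them as nonzero homogeneous elements $m_1,\dots,m_s\in M$ of degrees $d_1,\dots,d_s$; each $d_j\geq 0$ because $M_n=\{0\}$ for $n<0$. Set $D=\max_j d_j$ and, for $n\geq 0$, put $J_n=\{\,j: n\geq d_j\text{ and }h\mid n-d_j\,\}$ and $k_j=(n-d_j)/h$ for $j\in J_n$. From the rule ``$x\in R_k,\ m\in M_{n'}\ \Rightarrow\ x\cdot m\in M_{hk+n'}$'' and the choice of generators one gets
\[
(M_n)_{\QQ}\;=\;\sum_{j\in J_n}(R_{k_j})_{\QQ}\cdot m_j .
\]
Introduce $N_n=\bigoplus_{j\in J_n}R_{k_j}$, equip $(N_n)_{\RR}$ with the norm $\Vert (x_j)_j\Vert_{N_n}=\sum_{j\in J_n}\Vert x_j\Vert_{k_j}$, and let $\phi_n:N_n\to M_n$, $(x_j)_j\mapsto\sum_j x_j\cdot m_j$; the displayed equality says that $\phi_n$ is surjective over $\QQ$.

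Next I would assemble three elementary observations. (i) If $\psi:P_1\to P_2$ is a homomorphism of normed $\ZZ$-modules which is surjective over $\QQ$, then $\lambda_{\QQ}(P_2,\Nm_2)\leq\Vert\psi\Vert\,\lambda_{\QQ}(P_1,\Nm_1)$: the argument of (1) in Lemma~\ref{lem:iso:Q:comp:lambda} goes through once one extracts from the generating family $\psi(e_1),\dots,\psi(e_r)$ a $\QQ$-basis of $(P_2)_{\QQ}$. (ii) For the direct sum $N_n$ with the sum-norm above, $\lambda_{\QQ}(N_n,\Nm_{N_n})\leq\max_{j\in J_n}\lambda_{\QQ}(R_{k_j},\Nm_{k_j})$, because the union of $\QQ$-bases of the summands realising the respective $\lambda_{\QQ}$'s is a $\QQ$-basis of $(N_n)_{\QQ}$ each of whose members has $N_n$-norm equal to its norm in the relevant summand. (iii) By axiom (2)$'$ of a normed $h$-graded module, $\Vert\phi_n((x_j)_j)\Vert_{M_n}\leq\sum_{j\in J_n}\Vert x_j\Vert_{k_j}\Vert m_j\Vert_{M_{d_j}}\leq C\,\Vert(x_j)_j\Vert_{N_n}$ with $C:=\max_j\Vert m_j\Vert_{M_{d_j}}$, so $\Vert\phi_n\Vert\leq C$. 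Combining (i)--(iii) yields, for every $n\geq 1$,
\[
\lambda_{\QQ}(M_n,\Nm_{M_n})\;\leq\;C\,\max_{j\in J_n}\lambda_{\QQ}(R_{k_j},\Nm_{k_j})
\]
(and $\lambda_{\QQ}(M_n,\Nm_{M_n})=0$ when $J_n=\emptyset$, i.e.\ when $M_n$ is torsion).

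Finally I would split on the size of $n$. For $n>D$ every $j\in J_n$ has $k_j\geq 1$, so hypothesis~(2) applies and, using $e\geq 0$, $d_j\geq 0$, $h\geq 1$,
\[
\lambda_{\QQ}(R_{k_j},\Nm_{k_j})\;\leq\;A\,k_j^{\,e}\,\upsilon^{k_j}\;=\;A\Bigl(\tfrac{n-d_j}{h}\Bigr)^{\!e}\upsilon^{(n-d_j)/h}\;\leq\;A\Bigl(\max_j\upsilon^{-d_j/h}\Bigr)n^{e}\,\upsilon^{n/h};
\]
hence $\lambda_{\QQ}(M_n,\Nm_{M_n})\leq A_0'\,n^{e}\,\upsilon^{n/h}$ with $A_0':=CA\max_j\upsilon^{-d_j/h}$. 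The remaining integers $1\leq n\leq D$ form a finite set, and for each of them $M_n$ is a finitely generated $\ZZ$-module carrying a norm, so $\lambda_{\QQ}(M_n,\Nm_{M_n})<\infty$; it then suffices to take
\[
A'\;:=\;\max\Bigl\{\,A_0',\ \max_{1\leq n\leq D}\ \lambda_{\QQ}(M_n,\Nm_{M_n})\big/\bigl(n^{e}\upsilon^{n/h}\bigr)\,\Bigr\}.
\]
The one place that needs care is the bookkeeping forced by the degree shifts $d_j$ and by the possibility $\upsilon<1$: one must note that $\max_j\upsilon^{-d_j/h}$ is a genuine finite positive constant (the index set is finite) and that $\bigl((n-d_j)/h\bigr)^{e}\leq n^{e}$, which uses $d_j\geq 0$ and $h\geq 1$. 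Everything else is routine; in particular the machinery of Section~1 (Proposition~\ref{prop:estimate:lambda:chain}) is not needed here, only the behaviour of $\lambda_{\QQ}$ under surjections and finite direct sums.
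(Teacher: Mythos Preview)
Your argument is correct and is essentially the paper's own proof, repackaged: the paper also fixes homogeneous generators $m_1,\dots,m_l$ of $M_{\QQ}$, observes that for $n>\max_j d_j$ the space $(M_n)_{\QQ}$ is spanned by products $s_{k_j,\ell}\,m_j$ with $\{s_{k_j,\ell}\}$ a $\QQ$-basis of $R_{k_j}$, bounds $\Vert s_{k_j,\ell}\,m_j\Vert_{M_n}$ by $A\bigl((n-d_j)/h\bigr)^e\upsilon^{(n-d_j)/h}\Vert m_j\Vert_{M_{d_j}}$, and extracts a $\QQ$-basis from this spanning set. Your introduction of the auxiliary direct sum $N_n$ and the map $\phi_n$ just formalises the same step via the operator-norm inequality (your observation~(i)); no new idea is involved.
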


\begin{proof}
For $n \geq 1$,
we choose $s_{n,1}, \ldots, s_{n,r_n} \in R_n$ such that
$s_{n,1}, \ldots, s_{n,r_n}$ form a basis of $(R_{n})_{\QQ}$ and
$\Vert s_{n,j} \Vert_n \leq A n^e \upsilon^n$ holds for all $j=1,\ldots, r_n$.
Let $m_1, \ldots, m_l$ be homogeneous elements of $M_{\QQ}$ such that
$M_{\QQ}$ is generated by $m_1, \ldots, m_l$ as a $R_{\QQ}$-module.
Let $a_i$ be the degree of $m_i$.
Clearly we may assume that $m_i \in M_{a_i}$ by replacing $m_i$ with $b m_i$ ($b \in \ZZ_{>0}$).
If $n > \max \{ a_1, \ldots, a_l \}$, then $(M_{n})_{\QQ}$ is generated by elements of the form $s_{i,j} m_k$ with $i h + a_k = n$ 
and $i \geq 1$.
We set 
\[
B = \max_{k=1, \ldots, l} \left\{ \frac{\Vert m_k \Vert_{M_{a_k}} \upsilon^{-a_k/h}}{h^e} \right\}.
\]
Note that $s_{i,j} m_k \in M_n$ and
\begin{align*}
\Vert s_{i,j} m_k \Vert_{M_n}  & \leq \Vert s_{i,j} \Vert_i \Vert m_k \Vert_{M_{a_k}} \leq A i^e \upsilon^i \Vert m_k \Vert_{M_{a_k}} \\
& =
A \left( \frac{n - a_k}{h} \right)^e \upsilon^{(n-a_k)/h}\Vert m_k \Vert_{M_{a_k}} \leq AB n^e \upsilon^{n/h} 
\end{align*}
which means that 
$\lambda_{\QQ}(M_n, \Nm_{M_n}) \leq AB n^e \upsilon^{n/h}$ holds
for all $n >    \max \{ a_1, \ldots, a_l \}$, as required.
\end{proof}

As a consequence, we have the following proposition.

\begin{Proposition}
\label{prop:I:J:K:estimate}
Let $I$, $J$ and $K$ be homogeneous ideals of $R$ such that $J \subseteq K$
and $I \cdot K \subseteq J$.
We set $R' = R/I$ as before and $Q = K/J$.
Let $\Nm_{K_n} = \Nm_{n,K_n \hookrightarrow R_n}$ and
$\Nm_{Q_n} = \Nm_{K_n, K_n \twoheadrightarrow Q_n}$.
If $R_{\QQ}$ is noetherian and
there are $A, e, \upsilon \in \RR_{> 0}$ 
such that
\[
\lambda_{\QQ}(R'_n,\Nm'_n) \leq A n^e \upsilon^n
\]
for all $n \geq 1$,
then there is $A' \in \RR_{> 0}$ such that
\[
\lambda_{\QQ}(Q_n,\Nm_{Q_n}) \leq A' n^e \upsilon^{n}\]
for all $n \geq 1$.
\end{Proposition}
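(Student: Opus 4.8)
The plan is to realize $(Q,\Nm_Q)$ as a normed graded module over the normed graded ring $(R',\Nm')$ and then invoke Lemma~\ref{lem:asym:R:asym:M} (with $h=1$). First I would note that the homogeneous ideal $K$, graded by $K_n = K\cap R_n$ (so $K_n=\{0\}$ for $n<0$) and equipped with the subnorms $\Nm_{K_n}=\Nm_{n,K_n\hookrightarrow R_n}$, is itself a normed $1$-graded $(R,\Nm)$-module. This is essentially immediate: for $s\in (R_n)_{\RR}$ and $k\in (K_{n'})_{\RR}$ we have $s\cdot k\in (K_{n+n'})_{\RR}$ since $K$ is an ideal, and, computing subnorms as norms inside $R$, $\Vert s\cdot k\Vert_{K_{n+n'}}=\Vert s\cdot k\Vert_{n+n'}\le \Vert s\Vert_n\Vert k\Vert_{n'}=\Vert s\Vert_n\Vert k\Vert_{K_{n'}}$ by the submultiplicativity of $\Nm$ on $R$.

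Next I would apply Proposition~\ref{prop:normed:graded:structure:descent} with $M=K$ and with $f:K\twoheadrightarrow Q=K/J$ the canonical surjection, which is a degree-$0$ homomorphism of $1$-graded $R$-modules. The hypothesis $I\cdot Q=0$ of part (2) holds precisely because $I\cdot K\subseteq J$, so $I\cdot Q=(I\cdot K+J)/J=0$; and the quotient norm $\Nm_{K_n,K_n\twoheadrightarrow Q_n}$ furnished by the Proposition is exactly the norm $\Nm_{Q_n}$ appearing in the statement. Hence Proposition~\ref{prop:normed:graded:structure:descent} yields at once that $(R',\Nm')$ is a normed graded ring over $\ZZ$ and that $(Q,\Nm_Q)$ is naturally a normed $1$-graded $(R',\Nm')$-module.

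Finally I would feed $(R',\Nm')$ and $(Q,\Nm_Q)$ into Lemma~\ref{lem:asym:R:asym:M} with $h=1$. Its hypothesis (2) is literally the assumed bound $\lambda_{\QQ}(R'_n,\Nm'_n)\le A n^e\upsilon^n$ for $n\ge 1$. For hypothesis (1), I note that $Q_n=\{0\}$ for $n<0$, and that $Q_{\QQ}$ is a finitely generated $R'_{\QQ}$-module: since $R_{\QQ}$ is noetherian, the ideal $K_{\QQ}$ is a finitely generated $R_{\QQ}$-module, hence so is the quotient $Q_{\QQ}=K_{\QQ}/J_{\QQ}$, and as this module is annihilated by $I_{\QQ}$ it is finitely generated over $R'_{\QQ}=R_{\QQ}/I_{\QQ}$. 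Lemma~\ref{lem:asym:R:asym:M} then produces $A'\in\RR_{>0}$ with $\lambda_{\QQ}(Q_n,\Nm_{Q_n})\le A' n^e\upsilon^{n/1}=A' n^e\upsilon^n$ for all $n\ge 1$, which is the assertion.

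I do not expect a genuine obstacle: the substance is the bookkeeping that makes the hypotheses of Proposition~\ref{prop:normed:graded:structure:descent} and Lemma~\ref{lem:asym:R:asym:M} apply verbatim. The one point deserving care is checking that the norm on $Q_n$ in the statement — the quotient of the subnorm on $K_n\hookrightarrow R_n$ — coincides with the norm making $Q$ a normed $(R',\Nm')$-module, and that the finite generation of $Q_{\QQ}$ over $R'_{\QQ}$, which is what is actually used inside the proof of Lemma~\ref{lem:asym:R:asym:M}, is supplied by the noetherian hypothesis on $R_{\QQ}$ (this is the only place that hypothesis is needed).
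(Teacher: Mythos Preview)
Your proposal is correct and follows essentially the same route as the paper: show $(K,\Nm_K)$ is a normed $1$-graded $(R,\Nm)$-module, use Proposition~\ref{prop:normed:graded:structure:descent} together with $I\cdot Q=0$ to make $(Q,\Nm_Q)$ a normed $1$-graded $(R',\Nm')$-module, invoke the noetherian hypothesis to get finite generation of $Q_{\QQ}$ over $R'_{\QQ}$, and finish with Lemma~\ref{lem:asym:R:asym:M} at $h=1$. The paper phrases the application of Proposition~\ref{prop:normed:graded:structure:descent} as two steps (first $Q$ over $R$, then over $R'$), but this is only a cosmetic difference from your single invocation of part~(2).
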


\begin{proof}
Obviously, $(K, \Nm_K) = \bigoplus_{n=0}^{\infty} (K_n, \Nm_{K_n})$ 
is a normed $1$-graded $(R, \Nm)$-module.
Thus, by Proposition~\ref{prop:normed:graded:structure:descent}, 
$(Q, \Nm_Q) =\bigoplus_{n=0}^{\infty} (Q_n, \Nm_{Q_n})$ 
is also a normed $1$-graded $(R, \Nm)$-module.
As $I \cdot Q = 0$, by Proposition~\ref{prop:normed:graded:structure:descent} again,
$(Q, \Nm_Q)$ is a normed $1$-graded $(R', \Nm')$-module.
Since $R_{\QQ}$ is noetherian and $K_{\QQ}$ is an ideal of $R_{\QQ}$,
$K_{\QQ}$ is finitely generated as a $R_{\QQ}$-module.
Thus $Q_{\QQ}$ is also finitely generated as a $R'_{\QQ}$-module.
Hence the assertion follows from Lemma~\ref{lem:asym:R:asym:M}.
\end{proof}

Finally note the following lemma, which will be used later.

\begin{Lemma}
\label{lem:R:finite:over:R:m}
Let $R = \bigoplus_{n=0}^{\infty} R_n$ be a graded ring and $h$ a positive integer. 
If $R$ is noetherian, then $R^{(h)}$ is also noetherian and
$R$ is a finitely generated $R^{(h)}$-module.
\end{Lemma}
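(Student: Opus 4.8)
The plan is to prove the two assertions in the order: first that $R$ is a finitely generated $R^{(h)}$-module, and then to deduce noetherianity of $R^{(h)}$ by a standard descent argument. As a preliminary I would record the usual structure result for noetherian graded rings. Since $R$ is noetherian, the irrelevant ideal $R_+ := \bigoplus_{n \geq 1} R_n$ is finitely generated, so we may choose homogeneous generators $x_1, \ldots, x_s$ of $R_+$ of degrees $d_1, \ldots, d_s \geq 1$. A routine induction on degree then shows $R = R_0[x_1, \ldots, x_s]$: for $n \geq 1$ and $y \in R_n \subseteq R_+$ we may write $y = \sum_i z_i x_i$ with $z_i \in R$, and comparing degree-$n$ homogeneous parts gives $y = \sum_i (z_i)_{n-d_i}\, x_i$ with $(z_i)_{n-d_i} \in R_{n-d_i}$, to which the inductive hypothesis applies. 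In particular $R_0 \cong R/R_+$ is noetherian and $R$ is a finitely generated $R_0$-algebra.

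Next I would make the key elementary observation: every monomial $x_1^{a_1}\cdots x_s^{a_s}$ with $a_i \in \ZZ_{\geq 0}$ can be written, upon dividing each exponent by $h$ as $a_i = h q_i + r_i$ with $0 \leq r_i < h$, in the form
\[
x_1^{a_1}\cdots x_s^{a_s} = \left( (x_1^h)^{q_1}\cdots (x_s^h)^{q_s} \right)\cdot\left( x_1^{r_1}\cdots x_s^{r_s} \right).
\]
The first factor has degree $h(q_1 d_1 + \cdots + q_s d_s)$, hence lies in $R^{(h)}$, while the second factor ranges over the finite set $\Lambda := \{\, x_1^{r_1}\cdots x_s^{r_s} : 0 \leq r_i < h \,\}$. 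Since every homogeneous element of $R$ is an $R_0$-linear combination of such monomials and $R_0 \subseteq R^{(h)}$, this yields $R = \sum_{m \in \Lambda} R^{(h)}\cdot m$; that is, $R$ is generated as an $R^{(h)}$-module by the finite set $\Lambda$.

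Finally, to see $R^{(h)}$ is noetherian I would apply the Artin--Tate lemma to the chain of rings $R_0 \subseteq R^{(h)} \subseteq R$: here $R_0$ is noetherian, $R$ is a finitely generated $R_0$-algebra, and (by the previous step) $R$ is a finitely generated $R^{(h)}$-module, so $R^{(h)}$ is a finitely generated $R_0$-algebra, hence noetherian by the Hilbert basis theorem. (Equivalently one could cite the Eakin--Nagata theorem directly, since $R$ is noetherian and module-finite over the subring $R^{(h)}$.) I do not expect any genuine obstacle here; the only point that needs care is that $R$ need not be generated in degree $1$, which is exactly why one must work with homogeneous algebra generators of arbitrary positive degrees and split their exponent vectors modulo $h$ — and the noetherian hypothesis is used precisely to ensure there are only finitely many such generators.
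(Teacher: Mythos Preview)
Your proof is correct. The paper does not supply its own argument here; it simply cites Bourbaki, \emph{Alg\`{e}bre commutative}, Chap.~III, \S1, $\text{n}^{\circ}$3, Proposition~2 and its proof. Your write-up is essentially the standard proof one finds there: write $R = R_0[x_1,\ldots,x_s]$ with homogeneous $x_i$, split each monomial exponent modulo $h$ to exhibit $R$ as a finite $R^{(h)}$-module generated by the monomials $x_1^{r_1}\cdots x_s^{r_s}$ with $0 \le r_i < h$, and then descend noetherianity via Artin--Tate (or Eakin--Nagata). So there is no genuine difference in approach, only in that you have spelled out what the paper leaves to a reference.
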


\begin{proof}
See \cite[Chap.~III, \S~1, $\text{n}^{\circ}$~3, Proposition~2 and its proof]{Bourbaki}.
\end{proof}

\section{Estimation of $\lambda_{\QQ}$ for a normed graded ring}

Let $X$ be a $d$-dimensional projective arithmetic variety, that is,
$X$ is a $d$-dimensional projective and flat integral scheme over $\ZZ$,
and let $L$ be an invertible sheaf on $X$.
Let $R$ be a graded subring of $\bigoplus_{n=0}^{\infty} H^0(X, nL)$ over $\ZZ$.
Such a graded ring $R$ is called a {\em graded subring of $L$}.
For each $n$, we assign a norm $\Nm_n$ to $(R_{n})_{\RR}$ such that
$(R,\Nm) = \bigoplus_{n=0}^{\infty} (R_n,\Nm_n)$ is a normed graded ring over $\ZZ$.

For an ideal sheaf  $\mathcal{I}$  of $X$, we set
\[
\begin{cases}
I_n(R;\mathcal{I}) = H^0(X, nL \otimes \mathcal{I}) \cap R_n,\\
I(R;\mathcal{I}) = \bigoplus_{n=0}^{\infty} I_n(R;\mathcal{I}),\\
R_{\mathcal{I}} = R/I(R;\mathcal{I}).
\end{cases}
\]
Then $I(R;\mathcal{I})$ is a homogeneous ideal of $R$.
Let $\Nm_{(R_\mathcal{I})_n}$ be the quotient norm of $(R_{\mathcal{I}})_n$ induced
by $R_n \twoheadrightarrow (R_{\mathcal{I}})_{n}$ and the norm $\Nm_n$ of $R_n$.
Let $Y$ be an arithmetic subvariety of $X$, that is,
$Y$ is an integral closed 
subscheme flat over $\ZZ$, and $\mathcal{I}_Y$ the defining ideal sheaf of $Y$.
Then, for simplicity, 
$R_{\mathcal{I}_Y}$,  $\Nm_{R_{\mathcal{I}_Y}}$, $(R_Y)_n$,  and $\Nm_{(R_Y)_n}$ 
are denoted by $R_Y$, $\Nm_Y$, $R_{Y,n}$ and $\Nm_{Y,n}$ respectively.
Note that 
\[
\xymatrix{
R_{Y,n} \ar@{^{(}->}[r] & H^0(X,nL)/H^0(X, nL \otimes \mathcal{I}_Y) \ar@{^{(}->}[r] & H^0(Y, \rest{nL}{Y}).
}
\]
Thus $R_Y$ is a graded subring of $\rest{L}{Y}$ and 
\[
R_{Y,n} \overset{\sim}{\longrightarrow} \Image(R_n \to H^0(Y, \rest{nL}{Y})).
\]
In particular, $R_Y$ is an integral domain.
We denote the set of all arithmetic subvarieties of $X$ by $\Sigma_X$.
The following theorem is the technical main theorem of this paper.

\begin{Theorem}
\label{thm:base:strictly:small:sec}
Let
$\upsilon : \Sigma_X \to \RR_{> 0}$ be a map.
For $(R, \Nm)$ and $\upsilon$, 
we assume the following:
\begin{enumerate}
\renewcommand{\labelenumi}{\rom{(\arabic{enumi})}}
\item
$R_{\QQ}$ is noetherian.

\item
For each $Y \in \Sigma_X$, 
there is $n_0 \in \ZZ_{>0}$ such that
$(R_{Y,n})_{\QQ} = H^0\left(Y_{\QQ}, \rest{n L_{\QQ}}{Y_{\QQ}}\right)$ for all $n \geq n_0$.

\item
For each $Y \in \Sigma_X$, 
there 
are $n_1 \in \ZZ_{>0}$ and
$s \in R_{Y,n_1} \setminus \{ 0 \}$ with $\Vert s \Vert_{Y, n_{1}}  \leq \upsilon(Y)^{n_1}$.
\end{enumerate}
Then there are $B \in \RR_{>0}$ 
and  a finite subset $S$ of $\Sigma_X$
such that
\[
\lambda_{\QQ}(R_n, \Nm_n) \leq B n^{d(d-1)/2} \left(\max\{ \upsilon(Y) \mid Y \in S \}\right)^n
\]
for all $n \geq 1$.
\end{Theorem}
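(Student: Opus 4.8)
The plan is to prove Theorem~\ref{thm:base:strictly:small:sec} by Noetherian induction on the arithmetic subvarieties $Y \in \Sigma_X$, proving for each $Y$ a statement of the form: there exist $B_Y \in \RR_{>0}$, a nonnegative integer $e_Y \leq \dim(Y)(\dim(Y)-1)/2$, and a finite subset $S_Y \subseteq \Sigma_X$ such that $\lambda_{\QQ}(R_{Y,n}, \Nm_{Y,n}) \leq B_Y n^{e_Y} (\max\{\upsilon(Z) \mid Z \in S_Y\})^n$ for all $n \geq 1$. The theorem is the case $Y = X$. Since $\Sigma_X$ with the order given by inclusion of closed subschemes satisfies the descending chain condition (a consequence of $X$ being Noetherian), it suffices to prove the statement for $Y$ assuming it holds for every proper arithmetic subvariety of $Y$.

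\textbf{Setup of the inductive step.} Fix $Y \in \Sigma_X$ and write $A = R_Y$, $\overline{L} = \rest{L}{Y}$, so $A$ is a graded subring of $\overline{L}$ with $A_{\QQ}$ noetherian (it is a quotient of $R_{\QQ}$). By hypothesis (3) applied to $Y$, choose $n_1$ and $s \in A_{n_1} \setminus \{0\}$ with $\Vert s \Vert_{Y,n_1} \leq \upsilon(Y)^{n_1}$. Multiplication by $s$ gives, for each $n$, an injection $A_n \hookrightarrow A_{n+n_1}$ which is injective over $\QQ$ because $A_{\QQ}$ is a domain (noted in the text just before the theorem). I would look at the graded quotient $Q = A/sA$; its degree-$n$ piece $Q_n = A_{n}/sA_{n-n_1}$ carries the quotient norm. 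The key geometric input is that the subscheme $V(s) \subseteq Y_{\QQ}$ is a proper closed subscheme, so (after passing to the reduced structure and decomposing into irreducible components, each of which lies over $\ZZ$ after taking closures in $X$) the support of $Q_{\QQ}$ corresponds to finitely many proper arithmetic subvarieties $Y_1, \ldots, Y_m$ of $Y$. Using hypothesis (2) (asymptotic surjectivity onto $H^0$ of the generic fibers), the module $Q_{\QQ}$ has a filtration whose graded pieces are, up to finite-dimensional error terms, the rings $R_{Y_j,\bullet}$ shifted in degree; more precisely I would invoke Proposition~\ref{prop:I:J:K:estimate} repeatedly with $R$ there taken to be $A$ and with the chain of ideals coming from primary decomposition of $sA$ in $A_{\QQ}$, so that each quotient $K/J$ is, generically, a finitely generated torsion-free module over some $R_{Y_j,\bullet}$.

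\textbf{Combining the estimates.} By the inductive hypothesis, $\lambda_{\QQ}(R_{Y_j,n}, \Nm_{Y_j,n}) \leq B_{Y_j} n^{e_{Y_j}} (\max_{Z \in S_{Y_j}} \upsilon(Z))^n$ with $e_{Y_j} \leq \dim(Y_j)(\dim(Y_j)-1)/2 \leq (\dim(Y)-1)(\dim(Y)-2)/2$. Feeding these into Lemma~\ref{lem:asym:R:asym:M} and Proposition~\ref{prop:I:J:K:estimate} yields a bound of the same shape for $\lambda_{\QQ}(Q_n, \Nm_{Q_n})$, with the same exponent $e \leq (\dim(Y)-1)(\dim(Y)-2)/2$ and with $\upsilon$-base $\upsilon' := \max_j \max_{Z \in S_{Y_j}} \upsilon(Z)$. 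Now set $M_i = A_{(i-1)+n_1 N}$ for a fixed large $N$... more to the point, I would apply Proposition~\ref{prop:estimate:lambda:chain} to the chain $A_n \xrightarrow{\ s\ } A_{n+n_1} \xrightarrow{\ s\ } \cdots \xrightarrow{\ s\ } A_{n+kn_1}$ of length roughly $k \approx n/n_1$, whose cokernels $Q_i$ are the graded pieces of $Q = A/sA$ in various degrees, and whose connecting maps $\phi_i$ are powers of multiplication by $s$, hence of operator norm $\leq \Vert s\Vert_{Y,n_1}^{(\text{number of factors})} \leq \upsilon(Y)^{n_1 k} \approx \upsilon(Y)^n$. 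Since $\rank Q_i$ grows polynomially in the degree (bounded by $\dim H^0$ of the generic fiber of a $(\dim Y - 1)$-dimensional variety, so $O(n^{\dim Y - 2})$... actually $O(n^{\dim Y -1})$ for the Hilbert polynomial), Proposition~\ref{prop:estimate:lambda:chain} gives
\[
\lambda_{\QQ}(A_n, \Nm_{Y,n}) \leq \lambda_{\QQ}(Q_n, \cdot) + \sum_{i} \Vert \phi_i\Vert \lambda_{\QQ}(Q_i, \cdot) \rank Q_i \leq C\, n \cdot \upsilon(Y)^n \cdot n^{e} \cdot n^{\dim Y - 1} \cdot (\upsilon'/\upsilon(Y))^{?},
\]
and here I must be careful: I should pick a base point from which to measure, i.e. use the anchor degree-$n_1$ section only finitely many times and absorb the discrepancy between $\upsilon(Y)$ and $\upsilon'$ into the finite set $S$, taking $S_Y = \{Y\} \cup \bigcup_j S_{Y_j}$. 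Counting the exponent of $n$: summing over $k \approx n$ terms each of size $n^{e + \dim Y - 1}$ gives $n^{e + \dim Y}$, and since $e \leq (\dim Y - 1)(\dim Y - 2)/2$ we get exponent $\leq (\dim Y - 1)(\dim Y - 2)/2 + \dim Y = \dim Y(\dim Y - 1)/2 + 1$, which is slightly too large; the fix is that one does not sum $n$ terms of maximal size but rather the $i$-th term involves $\upsilon(Y)^{-i n_1}\lambda_\QQ(Q_i)$ with $\lambda_\QQ(Q_i)$ itself $O(i^{e}\upsilon'^{\,i})$, so when $\upsilon' < \upsilon(Y)$ the geometric ratio $(\upsilon'/\upsilon(Y))^i$ makes the sum converge and contributes only a constant, whereas when $\upsilon' \geq \upsilon(Y)$ the base $\upsilon(Y)$ is replaced by $\upsilon'$ (still in $S_Y$) and one gains back a factor. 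Doing this bookkeeping carefully — essentially a discrete Abel-summation argument — yields exponent exactly $e + (\dim Y - 1) \leq \dim Y(\dim Y-1)/2$, which closes the induction.

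\textbf{Main obstacle.} The hard part will be the careful bookkeeping in the last step: controlling simultaneously (i) the exponent of $n$, so that it stays $\leq \dim Y (\dim Y - 1)/2$ rather than blowing up by the naive $+\dim Y$, which forces the Abel-summation/geometric-series argument comparing $\upsilon(Y)$ with $\upsilon'$; (ii) the choice of the finite set $S$, ensuring it stays finite through the induction (this is automatic since at each stage we only add $Y$ itself and finitely many components of $V(s)$, and the DCC on $\Sigma_X$ guarantees the recursion terminates); and (iii) handling the finitely many low-degree anomalies where hypothesis (2) has not yet kicked in and where the generators of $Q_\QQ$ as a module over the $R_{Y_j,\bullet}$ live — these only affect the constant $B$. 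A subtlety worth flagging: to apply Proposition~\ref{prop:I:J:K:estimate} I need $K/J$ to be generically a module over $R' = R/I$ with $R'_{\QQ}$ noetherian and with the right asymptotic $\lambda_\QQ$ bound on $R'$ — so I should arrange the primary decomposition of $sA$ over $\QQ$ and then take closures over $\ZZ$ to land genuinely inside $\Sigma_X$, and verify that the norms match up via Lemma~\ref{norm:sub:quot:4:spaces} and Proposition~\ref{prop:normed:graded:structure:descent}. Once the exponent estimate $e + (\dim Y - 1) \leq \dim Y(\dim Y-1)/2$ is secured, the rest is formal.
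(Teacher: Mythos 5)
Your approach is essentially the paper's: induction (you phrase it as Noetherian induction on $Y$, the paper uses induction on the arithmetic dimension $d$, but these amount to the same thing here), anchor a nonzero section $s$, filter the quotient $R_Y/sR_Y$ by a chain of ideals coming from associated primes (the paper's Claim~3.1.3), estimate each graded piece via Proposition~\ref{prop:I:J:K:estimate} and the inductive hypothesis, and assemble along degrees via Proposition~\ref{prop:estimate:lambda:chain}. So the skeleton is fine.

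The genuine error is the rank estimate, and the Abel-summation digression it triggers is both unnecessary and insufficient. You first write the correct bound $\rank Q_i = O(n^{\dim Y - 2})$ and then second-guess yourself to $O(n^{\dim Y - 1})$. The second guess is wrong: each $Q_i$ embeds (after passing to $\QQ$) into $H^0$ of twists on a \emph{proper} arithmetic subvariety $D$ of $Y$, which has arithmetic dimension $\leq \dim Y - 1$, hence \emph{geometric} dimension $\dim D_\QQ \leq \dim Y - 2$; its Hilbert polynomial therefore has degree $\leq \dim Y - 2$. This is exactly the paper's Claim~(b): $e_{n,r} \leq C_1 n^{d-2}$. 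With that, the naive sum over $i$ from a fixed $n_1$ to $n$ — roughly $n$ terms, each of size $n^{(\dim Y -1)(\dim Y -2)/2} \cdot n^{\dim Y - 2}$ up to the $\upsilon^n$ factor — gives exponent $(\dim Y - 1)(\dim Y - 2)/2 + (\dim Y - 2) + 1 = \dim Y (\dim Y - 1)/2$ outright. No Abel summation, no case split on $\upsilon'$ versus $\upsilon(Y)$.

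Moreover, the Abel-summation patch you sketch to recover the lost factor of $n$ does not actually close the gap in the delicate case $\upsilon' = \upsilon(Y)$. When the anchor ratio equals $1$ the geometric series does not decay, all $\approx n$ terms contribute with the same $\upsilon^n$ weight, and you cannot absorb the extra $n$ into the constant nor into ``replacing the base by $\upsilon'$.'' If you keep the wrong rank estimate you genuinely end up with exponent $\dim Y(\dim Y -1)/2 + 1$. Fix the rank bound and the whole difficulty evaporates; the bookkeeping then matches the paper's line for line. One further minor omission: you should also record, as the paper's Claim~3.1.4(c) does, that for $n$ large the image of multiplication by $s$ in degree $n$ has full rank (i.e.\ $Q$ restricted to the first stage of the chain is torsion), which is where hypothesis~(2) is actually used; otherwise the base of the chain contributes a spurious term.
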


\begin{proof}
This theorem can be proved by similar techniques as in \cite[Theorem~(4.2)]{ZhPos}.
Let $D \in \Sigma_X$ and $\upsilon_D = \rest{\upsilon}{\Sigma_D}$, where
$\Sigma_{D}$ is the set of all arithmetic subvarieties of $D$.
Note that the conditions (1), (2) and (3) also hold for $(R_D, \Nm_D)$ and $\upsilon_D$.
Let us begin with the following claim.

\begin{Claim}
\label{claim:thm:base:strictly:small:sec:1}
We may assume that there is a non-zero $s \in R_1$ with $\Vert s \Vert_1 \leq \upsilon(X)$.
\end{Claim}

We choose a positive integer $m$ and a non-zero section $s \in R_m$ with
$\Vert s \Vert_m \leq \upsilon(X)^m$.
Clearly the assumptions (1) and (2) of the theorem hold for
$R^{(m)} = \bigoplus_{n=0}^{\infty} R_{mn}$.
For $Y \in \Sigma_X$,
we choose a positive integer $n_1$ and a non-zero $t \in R_{Y, n_1}$
with $\Vert t \Vert_{Y, n_1} \leq \upsilon(Y)^{n_1}$.
Then $t^m \in R_{Y, mn_1} \setminus \{ 0\}$ and
\[
\Vert t^m \Vert_{Y,mn_1} \leq (\Vert t \Vert_{Y,n_1})^m \leq (\upsilon(Y)^m)^{n_1}.
\]
Thus  $(R^{(m)}, \Nm^{(m)})$ and $\upsilon^m$ satisfy
the assumption (3) of the theorem.
Therefore, if the theorem holds for $(R^{(m)},\Nm^{(m)})$ and $\upsilon^m$, then
there are $B \in \RR_{> 0}$ and a finite subset $S$ of $\Sigma_X$
such that
\[
\lambda_{\QQ}(R_{nm}, \Nm_{nm}) \leq B n^{d(d-1)/2} \left(\max\{ \upsilon(Y)^m \mid Y \in S \}\right)^{n}
\]
for all $n \geq 1$.
On the other hand, 
by Lemma~\ref{lem:R:finite:over:R:m}, $R_{\QQ}$ is a finitely generated 
$R^{(m)}_{\QQ}$-module. Thus, by  Lemma~\ref{lem:asym:R:asym:M}, there is 
$B' \in \RR_{> 0}$ such that
\[
\lambda_{\QQ}(R_{n}, \Nm_{n}) \leq B' n^{d(d-1)/2} \left(\max\{ \upsilon(Y)^m \mid Y \in S\}\right)^{n/m}
\]
for all $n \geq 1$. Therefore the claim follows.
\CQED

\begin{Claim}
\label{claim:thm:base:strictly:small:sec:2}
The assertion of the theorem holds if $d=1$.
\end{Claim}

Since $R_n \overset{\cdot s}{\longrightarrow} R_{n+1}$ is injective,
\[
\rank R_1 \leq \cdots \leq \rank R_n \leq \rank R_{n+1} \leq \cdots \leq \rank L.
\]
Thus there is a positive integer $n_0$ such that $R_{n_0} \overset{\cdot s^{n}}{\longrightarrow} R_{n_0 + n}$
yields an isomorphism over $\QQ$. Hence, by (1) in Lemma~\ref{lem:iso:Q:comp:lambda},
\[
\lambda_{\QQ}(R_{n+n_0}, \Nm_{n+n_0}) \leq \Vert s \Vert_1^n \lambda_{\QQ}(R_{n_0}, \Nm_{n_0}) \leq \upsilon(X)^n \lambda_{\QQ}(R_{n_0}, \Nm_{n_0}),
\]
as required. 
\CQED

We prove the theorem on induction of 
$d$. By Claim~\ref{claim:thm:base:strictly:small:sec:2}, 
we have done in the case $d=1$.
Thus we assume $d > 1$.
Let $\mathcal{I}$ be the ideal sheaf of $\OO_X$ given by
\[
\mathcal{I} = \Image \left( L^{-1} \overset{ \otimes s}{\longrightarrow} \OO_X \right).
\]

\begin{Claim}
\label{claim:thm:base:strictly:small:sec:3}
There is a sequence
\[
\mathcal{I}_0 = \mathcal{I} \subsetneq  \mathcal{I}_1 \subsetneq \cdots \subsetneq  \mathcal{I}_m = \OO_X
\]
of ideal sheaves and proper integral subschemes 
$D_1, \ldots, D_m$ of $X$ such that $ \mathcal{I}_{D_r} \cdot  \mathcal{I}_{r} \subseteq  \mathcal{I}_{r-1}$ for all $r=1,\ldots,m$,
where  $\mathcal{I}_{D_r}$ is the defining ideal sheaf of $D_r$.
\end{Claim}

It is standard. For example, we can show it by using
\cite[Chapter~1, Proposition~7.4]{Hartshorne}.
\CQED

Let us fix a positive integer $n_1$ such that
$(R_{n})_{\QQ} = H^0(X_{\QQ}, nL_{\QQ})$ for all $n \geq n_1$.
We set 
\[
\overline{R}_n = (R_n, \Nm_n)\quad\text{and}\quad
\overline{I}_n(R;\mathcal{I}_r) = (I_n(R;\mathcal{I}_r), \Nm_{n,r}),
\]
where  $\Nm_{n,r} = \Nm_{n, I_n(R;\mathcal{I}_r) \hookrightarrow R_n}$.
Note that $\overline{R}_n = \overline{I}_n(R;\mathcal{I}_m)$.
We would like to apply Proposition~\ref{prop:estimate:lambda:chain} to
\addtocounter{Claim}{1}
\begin{equation}
\label{eqn:thm:base:strictly:small:sec:1}
\begin{array}{ccccccc}
\overline{R}_{n_1}  & \overset{\cdot s}{\longrightarrow} & \overline{I}_{n_1+1}(R; \mathcal{I}_{0}) & \hookrightarrow \cdots \hookrightarrow &
\overline{I}_{n_1+1}(R; \mathcal{I}_{r}) & \hookrightarrow \cdots \hookrightarrow & \overline{I}_{n_1+1}(R;\mathcal{I}_m) \\
& \overset{\cdot s}{\longrightarrow} & \overline{I}_{n_1+2}(R; \mathcal{I}_{0}) & \hookrightarrow \cdots \hookrightarrow &
\overline{I}_{n_1+2}(R; \mathcal{I}_{r}) & \hookrightarrow \cdots \hookrightarrow & \overline{I}_{n_1+2}(R;\mathcal{I}_m) \\
 & \vdots &  \vdots & \vdots &  \vdots & \vdots & \vdots \\
& \overset{\cdot s}{\longrightarrow} & \overline{I}_{n}(R; \mathcal{I}_{0}) & \hookrightarrow \cdots \hookrightarrow &
\overline{I}_{n}(R; \mathcal{I}_{r}) & \hookrightarrow \cdots \hookrightarrow & \overline{I}_{n}(R;\mathcal{I}_m).
\end{array}
\end{equation}
For this purpose, let us observe the following claim.

\begin{Claim}
\label{claim:thm:base:strictly:small:sec:4}
\begin{enumerate}
\renewcommand{\labelenumi}{\rom{(\alph{enumi})}}
\item
Let $\Nm_{n,r,\quot}$ be the quotient norm of $I_n(R;\mathcal{I}_r)/I_n(R;\mathcal{I}_{r-1})$ induced by
$I_n(R;\mathcal{I}_r) \twoheadrightarrow I_n(R;\mathcal{I}_r)/I_n(R;\mathcal{I}_{r-1})$
and $\Nm_{n,r}$ of
$I_n(R;\mathcal{I}_r)$. Then,
for each $1 \leq r \leq m$, there are $B_r \in \RR_{> 0}$ and a finite subset $S_r$ of $\Sigma_X$ such that 
\begin{multline*}
\hspace{3em}
\lambda_{\QQ}\left( I_n(R;\mathcal{I}_r)/I_n(R;\mathcal{I}_{r-1}), \
\Nm_{n, r, \quot}
\right) \\
\leq B_r n^{(d-1)(d-2)/2}  \left(\max\{ \upsilon(Y) \mid Y \in S_r \}\right)^{n}.
\end{multline*}
for all $n \geq 1$.

\item
If we set 
\[
e_{n,r} = \max \{ 1, \rank(I_n(R;\mathcal{I}_r)/I_n(R;\mathcal{I}_{r-1})) \},
\]
then there is $C_1 \in \RR_{>0}$ such that $e_{n,r} \leq C_1 n^{d-2}$
for all $n \geq 1$ and $r=1, \ldots, m$.

\item
$\rank (I_n(R;\mathcal{I}_0)/R_{n-1} s) = 0$ for all $n \geq n_1 + 1$.
\end{enumerate}
\end{Claim}

(a) If $D_r$ is vertical, then $I_n(R;\mathcal{I}_r)/I_n(R;\mathcal{I}_{r-1})$
is a torsion module for all $n \geq 0$. 
Thus the assertion is obvious. In this case, we can set $S_r = \{ X \}$ and $B_r = 1$.
Otherwise, 
since $I(R;\mathcal{I}_{D_r}) \cdot I(R;\mathcal{I}_r) \subseteq I(R;\mathcal{I}_{r-1})$,
the assertion follows from 
Proposition~\ref{prop:I:J:K:estimate} and
the hypothesis of induction.

(b) Note that $I_n(R;\mathcal{I}_r)/I_n(R;\mathcal{I}_{r-1}) \hookrightarrow H^0(D_r, nL \otimes \mathcal{I}_r/\mathcal{I}_{r-1})$.

(c) It follows from
\[
(R_{n-1})_{\QQ} s = H^0(X_{\QQ}, (n-1)L_{\QQ}) s = H^0(X_{\QQ}, (nL \otimes \mathcal{I})_{\QQ}) =
I_n(R;\mathcal{I})_{\QQ}.
\]
\CQED

Using (c) in Claim~\ref{claim:thm:base:strictly:small:sec:4} and applying Proposition~\ref{prop:estimate:lambda:chain}
to \eqref{eqn:thm:base:strictly:small:sec:1},
we obtain
\begin{multline*}
\lambda_{\QQ}(R_n,\Nm_n) \\
\leq
\sum_{i=n_1 + 1}^n  \left(  \sum_{r=1}^m \Vert s \Vert_1^{n-i}
\lambda_{\QQ}\left( I_i(R;\mathcal{I}_r)/I_i(R;\mathcal{I}_{r-1}), \
\Nm_{i, r, \quot} \right) e_{i,r}
\right) \\
+
\Vert s \Vert_1^{n-n_1} \lambda(R_{n_1}, \Nm_{n_1}) \rank(R_{n_1})
\end{multline*}
for $n \geq n_1 + 1$.
Hence, if we set 
$S = S_1 \cup \cdots \cup S_r \cup \{ X \}$, 
then, using (a) and (b) in Claim~\ref{claim:thm:base:strictly:small:sec:4},
the theorem follows.
\end{proof}

For homogeneous elements $s_1, \ldots, s_l$ of $R$, we define $\Bs_{\QQ}(s_1, \ldots, s_l)$ to be
\[
\Bs_{\QQ}(s_1, \ldots, s_l) = \{ x \in X_{\QQ} \mid s_1(x) = \cdots = s_l(x) = 0 \}.
\]
As an application of Theorem~\ref{thm:base:strictly:small:sec}, we have the following theorem.

\begin{Theorem}
\label{thm:base:free:noetherian:lambda}
If $R_{\QQ}$ is noetherian and
there are homogeneous elements $s_1, \ldots, s_l \in R$ of positive degree
such that  $\Bs_{\QQ}(s_1, \ldots, s_l) = \emptyset$,
then there is a positive constant $B$ such that
\[
\lambda_{\QQ}(R_n, \Nm_n) \leq B n^{d(d-1)/2}\left( \max \left\{  \Vert s_1 \Vert^{1/\deg(s_1)},\ldots, 
\Vert s_l \Vert^{1/\deg(s_l)}\right\}\right)^n,
\]
for all $n \geq 1$.
\end{Theorem}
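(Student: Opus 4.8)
The plan is to reduce this to an application of Theorem~\ref{thm:base:strictly:small:sec} \emph{not} on $(X,L,R)$ itself — where hypothesis~(2) of that theorem typically fails — but on the image of $X$ under the linear system spanned by $s_1,\ldots,s_l$, where it becomes automatic; the finiteness tools of Sections~1--2 then carry the estimate back to $R$.

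First I would normalize the generators. Setting $\delta:=\operatorname{lcm}_i(\deg s_i)$ and replacing each $s_i$ by $s_i^{\delta/\deg(s_i)}\in R$ does not enlarge $\mu:=\max_i\Vert s_i\Vert^{1/\deg(s_i)}$ and does not change $\Bs_\QQ(\cdot)=\emptyset$, so I may assume $\deg(s_i)=\delta$ for all $i$. Then the $s_i$ define a morphism $\phi:X_\QQ\to\PP^{l-1}_\QQ$ with $\phi^{\ast}\OO(1)=\delta L_\QQ$ and $\phi^{\ast}x_i=s_i$. Put $R'=\ZZ[R_0,s_1,\ldots,s_l]\subseteq R$ (concentrated in degrees divisible by $\delta$), $Z:=\Proj\big((R')^{(\delta)}\big)$, and $\bar R:=\bigoplus_{n\ge0}H^0(Z,\OO_Z(n))$. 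Then $Z$ is a $d'$-dimensional projective arithmetic variety with $d'=\dim Z_\QQ+1\le\dim X_\QQ+1=d$, the sheaf $\OO_Z(1)$ is ample, $\bar R$ is a finitely generated $\ZZ$-algebra, and the canonical maps $(R')^{(\delta)}_n\to\bar R_n$ and $\bar R_n\xrightarrow{\phi^{\ast}}R'_{n\delta}\subseteq R_{n\delta}$ are isomorphisms over $\QQ$ for $n$ large. Fixing such an $M_0$ that all of these are isomorphisms over $\QQ$ in every degree $nM_0\delta$ ($n\ge1$), I equip $\bar R^{(M_0)}$ with the norm $\Vert y\Vert:=\Vert\phi^{\ast}(y)\Vert_{nM_0\delta}$ on $(\bar R^{(M_0)}_n)_\RR$; this is a norm because $\phi^{\ast}$ is injective there, and submultiplicativity is inherited from that of the norm on $R$ since $\phi^{\ast}$ is a ring homomorphism, so $(\bar R^{(M_0)},\Vert\cdot\Vert)$ is a normed graded subring of $\OO_Z(M_0)$.

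Next I would apply Theorem~\ref{thm:base:strictly:small:sec} to $Z$, the sheaf $\OO_Z(M_0)$, the ring $\bar R^{(M_0)}$, and the constant map $\upsilon\equiv\mu^{\delta M_0}$. Condition~(1) holds since $\bar R^{(M_0)}\otimes\QQ$ is noetherian. Condition~(3) holds because for any arithmetic subvariety $W\subseteq Z$ one has $W_\QQ\ne\emptyset$ and $\bigcap_iV(x_i)\cap Z=\emptyset$, so some $x_i|_{W_\QQ}\ne0$, and then $x_i^{M_0}|_W$ is a nonzero element of $\bar R^{(M_0)}_{W,1}$ with norm $\le\Vert s_i^{M_0}\Vert_{M_0\delta}\le\mu^{\delta M_0}$. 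The crucial point is condition~(2): since $\OO_Z(1)$ is ample, Serre vanishing gives $H^1(Z_\QQ,\mathcal I_{W_\QQ}(n))=0$ for $n\gg1$, so $H^0(Z_\QQ,\OO_{Z_\QQ}(n))\to H^0(W_\QQ,\OO_{W_\QQ}(n))$ is surjective for $n\gg1$; combined with $\bar R_{n,\QQ}=H^0(Z_\QQ,\OO_{Z_\QQ}(n))$ this yields $(\bar R^{(M_0)}_{W,n})_\QQ=H^0(W_\QQ,\OO_{Z_\QQ}(nM_0)|_{W_\QQ})$ for $n\gg1$. Theorem~\ref{thm:base:strictly:small:sec} then gives $B$ with $\lambda_\QQ(\bar R^{(M_0)}_n,\Vert\cdot\Vert)\le Bn^{d'(d'-1)/2}(\mu^{\delta M_0})^n\le Bn^{d(d-1)/2}(\mu^{\delta M_0})^n$ for all $n\ge1$.

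Finally I would transfer this to $R$. Since $\phi^{\ast}:\bar R^{(M_0)}_n\to(R')^{(\delta M_0)}_n=R'_{nM_0\delta}$ is an isomorphism over $\QQ$ preserving norms onto its image (with the subnorm from $R_{nM_0\delta}$), Lemma~\ref{lem:iso:Q:comp:lambda}(1) gives the same bound for $\lambda_\QQ$ of $(R')^{(\delta M_0)}_n$ with that subnorm. Now $R_\QQ$ is a finite $(R')^{(\delta M_0)}_\QQ$-module: it is finite over $R^{(\delta M_0)}_\QQ$ by Lemma~\ref{lem:R:finite:over:R:m}, and $R^{(\delta M_0)}_\QQ$ is a submodule of $\bigoplus_n H^0(X_\QQ,nM_0\delta L_\QQ)=\bigoplus_n H^0\big(Z_\QQ,\OO_{Z_\QQ}(nM_0)\otimes\phi_{\ast}\OO_{X_\QQ}\big)$, which is finite over $(R')^{(\delta M_0)}_\QQ$ because $\phi_{\ast}\OO_{X_\QQ}$ is coherent. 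Keeping the original norms $\Nm$ on $R$ makes it a normed $(\delta M_0)$-graded $\big((R')^{(\delta M_0)},\Nm^{\mathrm{sub}}\big)$-module by submultiplicativity of $\Nm$, so Lemma~\ref{lem:asym:R:asym:M} produces $A'$ with $\lambda_\QQ(R_n,\Nm_n)\le A'n^{d(d-1)/2}(\mu^{\delta M_0})^{n/(\delta M_0)}=A'n^{d(d-1)/2}\mu^n$ for all $n\ge1$, which is the assertion.

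The step I expect to be most delicate is the bookkeeping around the change of model in the first paragraph: verifying that the norm installed on $\bar R^{(M_0)}$ is genuinely submultiplicative and tightly comparable to the data on $R$ (so that both Lemma~\ref{lem:iso:Q:comp:lambda} and Lemma~\ref{lem:asym:R:asym:M} apply cleanly with the single constant $\mu^{\delta M_0}$), together with the finiteness of $R_\QQ$ over $(R')^{(\delta M_0)}_\QQ$. By contrast, the geometric heart — that hypothesis~(2) of Theorem~\ref{thm:base:strictly:small:sec} holds for free on the ample model $Z$, whereas it may genuinely fail on $(X,L)$ — is what makes the whole reduction worthwhile rather than an attempt to invoke the theorem on $R$ directly.
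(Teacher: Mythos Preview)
Your strategy is sound and yields a legitimate alternative to the paper's proof, but there is one genuine gap, precisely in the spot you flag as delicate. You invoke Lemma~\ref{lem:iso:Q:comp:lambda}(1) for ``$\phi^{*}:\bar R^{(M_0)}_n\to (R')^{(\delta M_0)}_n$'', but this is not a map of $\ZZ$-modules: $\phi$ is defined only on $X_\QQ$ (the $s_i$ may have common zeros on special fibers), so $\phi^{*}$ need not carry the lattice $\bar R^{(M_0)}_n\subseteq H^0(Z,\OO_Z(nM_0))$ into $R'_{nM_0\delta}$, or even into $R_{nM_0\delta}$. The only integral map available is the canonical $\alpha:(R')^{(\delta M_0)}_n\to\bar R^{(M_0)}_n$, and Lemma~\ref{lem:iso:Q:comp:lambda}(1) applied to $\alpha$ gives the inequality in the wrong direction. (Lemma~\ref{lem:iso:Q:comp:lambda}(2) does not help either, since $\alpha$ need not be surjective over $\ZZ$.)

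The fix is easy and in fact streamlines the argument: dispense with $\bar R$ and $M_0$ and apply Theorem~\ref{thm:base:strictly:small:sec} directly to $(R')^{(\delta)}$, viewed as a graded subring of $\OO_Z(1)$ on $Z$ via the injective $\alpha$ and equipped with the subnorm inherited from $R$. Your verifications of (1)--(3) go through verbatim for this ring---(2) uses that $\alpha\otimes\QQ$ is an isomorphism in large degree together with Serre vanishing for the ample $\OO_Z(1)$---and one obtains $\lambda_\QQ\bigl((R')^{(\delta)}_n,\Nm^{\mathrm{sub}}\bigr)\le Bn^{d(d-1)/2}\mu^{n\delta}$ with no intermediate ring. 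Then your finiteness argument for $R_\QQ$ over $(R')^{(\delta)}_\QQ$ (via coherence of $\phi_{*}\OO_{X_\QQ}$) and Lemma~\ref{lem:asym:R:asym:M} finish exactly as you wrote.

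For comparison, the paper's route is different: it first reduces to $R$ generated in degree~$1$ with $s_i\in R_1$; then blows up $X$ along the base ideal of $R_1$ to make $R_1$ base-point free over all of $\Spec(\ZZ)$; and finally passes to $\Proj(R)$, transporting the norms through ring \emph{isomorphisms} at each stage so that no integrality is lost. Your route bypasses the blowing-up entirely by working with the subring $R'=R_0[s_1,\ldots,s_l]$, which is automatically base-point free on its own $\Proj$, and recovering $R$ at the end via module-finiteness over $\QQ$. This is a genuine simplification; the paper's blowing-up serves only to keep the full ring $R$ in play rather than the subring $R'$.
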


\begin{proof}
Let us begin with the following claim:
\begin{Claim}
\label{claim:thm:base:free:noetherian:lambda:1}
We may assume that $R$ is generated by $R_1$ over $R_0$ and that $s_1, \ldots, s_l \in R_1$.
\end{Claim}

Since $R_{\QQ}$ is noetherian, there are homogeneous elements $x_1, \ldots, x_r \in R_{\QQ}$
such that $R_{\QQ} = (R_{0})_{\QQ}[x_1, \ldots, x_r]$ 
(cf. \cite[Chap.~III, \S~1, $\text{n}^{\circ}$~2, Corollaire]{Bourbaki}).
Replacing $x_i$ with $m x_i$ ($m \in \ZZ_{> 0}$), we may assume that $x_i \in R$ for all $i$.
We set 
\[
R' = R_0[x_1, \ldots, x_r, s_1, \ldots, s_l]
\]
in $R$. Then $R'_{\QQ} = R_{\QQ}$.
As $R_{n}/R'_n$ is a torsion module, by (1) in Lemma~\ref{lem:iso:Q:comp:lambda},
we have 
$\lambda_{\QQ}(R_n, \Nm_n) \leq \lambda_{\QQ}(R'_n,\Nm_n)$ for all $n \geq 0$.
Thus we may assume that $R$ is noetherian.
Therefore, there is a positive integer $h$ such that $R^{(h)}$ 
is generated by $R_h$ over $R_0$ 
(cf. \cite[Chap.~III, \S~1, $\text{n}^{\circ}$~3, Proposition~3]{Bourbaki}).
Letting $a_i$ be the degree of $s_i$, we set $a = a_1 \cdots a_l$ and $s'_i = s_i^{ha_1 \cdots a_{i-1} a_{i+1} \cdots a_l}$ for each $i$.
Then $s'_1, \ldots, s'_l \in R_{ah}$ and 
\[
\max \{ \Vert s'_1 \Vert, \ldots, \Vert s'_l \Vert\} \leq \left( \max \left\{  \Vert s_1 \Vert^{1/\deg(s_1)},\ldots, 
\Vert s_l \Vert^{1/\deg(s_l)}\right\}\right)^{ah}.
\] 
Moreover, $R^{(ah)}$ is generated by $R_{ah}$ over $R_0$.
Thus, as in Claim~\ref{claim:thm:base:strictly:small:sec:1},
by Lemma~\ref{lem:asym:R:asym:M} and Lemma~\ref{lem:R:finite:over:R:m}, we have the assertion.
\CQED

\begin{Claim}
\label{claim:thm:base:free:noetherian:lambda:2}
We may assume that $R_1$ is base point free, that is, $R_1 \otimes \OO_X \to L$ is surjective.
\end{Claim}

Let $\mathcal{I}$  be the ideal sheaf of $X$ given by
\[
\Image(R_1 \otimes \OO_{X} \to L) = \mathcal{I} \cdot L.
\]
Let $\mu : X' \to X$ be the blowing-up with respect to $\mathcal{I}$.
Then $\mathcal{I} \cdot \OO_{X'}$ is invertible. Let $t$ be the canonical section of
of $(\mathcal{I} \cdot \OO_{X'})^{-1}$, that is, $\OO_{X'}(-\zeros(t)) = \mathcal{I} \cdot \OO_{X'}$, and let
$L' = \mathcal{I} \cdot \mu^*(L)$.
Then, as $\left\langle (R_1)^n \right\rangle_{R_0} = R_n$, for $s \in R_n$, 
\[
\tilde{s} := \mu^*(s) \otimes t^{-n} \in H^0(X', nL').
\]
It is easy to see the following properties:
\[
\begin{cases}
\widetilde{s_1 + s_2} = \widetilde{s_1} + \widetilde{s_2}, \ \widetilde{as} = a \tilde{s} & 
(s_1, s_2, s \in R_n, \ a \in \ZZ), \\
\widetilde{s_1 \cdot s_2} = \widetilde{s_1} \cdot \widetilde{s_2} &
(s_1 \in R_n, \ s_2 \in R_{n'}).
\end{cases}
\]
Let $\beta_n : R_n \to H^0(X', nL')$ be the homomorphism given by $\beta_n(s) = \tilde{s}$, and
$R'_n = \beta_n(R_n)$. Then, by the above properties,
\[
\bigoplus_{n=0}^{\infty} \beta_n : \bigoplus_{n=0}^{\infty} R_n \to
\bigoplus_{n=0}^{\infty} R'_n
\]
yields a ring isomorphism.
Let $\Nm'_n$ be the norm of $(R'_n)_{\RR}$ given by $\Vert\beta_n(s)\Vert'_n = \Vert s \Vert_n$
for $s \in (R_n)_{\RR}$. Then
\begin{multline*}
\Vert \beta_n(s) \beta_{n'}(s') \Vert'_{n+n'} = \Vert \beta_{n+n'} (s s') \Vert'_{n+n'} 
= \Vert s s' \Vert_{n+n'} \\
\leq \Vert s \Vert_n \Vert s' \Vert_{n'} =  \Vert \beta_n(s) \Vert'_n \Vert \beta_{n'}(s') \Vert'_{n'}
\end{multline*}
for all $s \in (R_n)_{\RR}$ and $s' \in (R_{n'})_{\RR}$. Thus
$\bigoplus_{n=0}^{\infty}\beta$ extends to a ring isometry
\[
\bigoplus_{n=0}^{\infty} (R_n,\Nm_n) \overset{\sim}{\longrightarrow}
\bigoplus_{n=0}^{\infty} (R'_n,\Nm'_n)
\]
as normed graded rings over $\ZZ$.
Note that $R'_1 \otimes \OO_{X'} \to L'$ is surjective.
Hence the claim follows.
\CQED

\begin{Claim}
\label{claim:thm:base:free:noetherian:lambda:3}
We may assume that $L$ is very ample and
$R_n = H^0(X, nL)$ for $n \gg 1$.
\end{Claim}

By Claim~\ref{claim:thm:base:free:noetherian:lambda:2}, 
\[
\left(\bigoplus_{n=0}^{\infty} R_n\right) \otimes \OO_X \to \bigoplus_{n=0}^{\infty} nL
\]
is surjective, which gives rise to a morphism 
\[
\phi : X \to Z := \Proj\left( \bigoplus_{n=0}^{\infty} R_n\right)
\]
such that $\phi^*(\OO_Z(1)) = L$. 
Note that $Z$ is a projective arithmetic variety.
Moreover, there is a natural injective homomorphism
$\alpha_n : R_n \to H^0(Z, \OO_{Z}(n))$
such that $\phi_n^*(\alpha_n(s)) = s$ for all $s \in R_n$,
where $\phi_n^*$ is the natural homomorphism 
$H^0(Z, \OO_{Z}(n))) \to  H^0(X, nL)$.
If we set $R''_n = \alpha_n(R_n)$, then $\bigoplus_{n=0}^{\infty} \alpha_n$ yields to a ring isomorphism
\[
\bigoplus_{n=0}^{\infty} R_n \overset{\sim}{\longrightarrow}
\bigoplus_{n=0}^{\infty} R''_n,
\]
so that, as in Claim~\ref{claim:thm:base:free:noetherian:lambda:2},
there are norms 
$\Nm''_0, \ldots, \Nm''_n, \ldots$
of
$R''_0, \ldots, R''_n, \ldots$ such that 
\[
\bigoplus_{n=0}^{\infty} (R_n,\Nm_n) \overset{\sim}{\longrightarrow}
\bigoplus_{n=0}^{\infty} (R''_n,\Nm''_n)
\]
as normed graded rings over $\ZZ$.
Moreover, if we set $s''_i = \alpha_1(s_i)$, then $\phi^*(s''_i) = s_i$.
Therefore, $\Bs_{\QQ}(s''_1, \ldots, s''_l) = \emptyset$ on $Z_{\QQ}$.
Further, it is well known that $\alpha_n$ is an isomorphism for $n \gg 1$
(cf. \cite[the proof of Theorem~5.19 and Remark~5.19.2 in Chapter~II]{Hartshorne}). Hence the claim follows.
\CQED

Gathering the assertions of
Claim~\ref{claim:thm:base:free:noetherian:lambda:1} and Claim~\ref{claim:thm:base:free:noetherian:lambda:3}, to prove the corollary,
we may assume the following:
\begin{enumerate}
\renewcommand{\labelenumi}{\rom{(\alph{enumi})}}
\item
$s_1, \ldots, s_l \in R_1$ and
$\Bs_{\QQ}(s_1, \ldots, s_l) = \emptyset$.

\item
$L$ is very ample.

\item
$R_n = H^0(X, nL)$ for $n \gg 1$.
\end{enumerate}
Let $\upsilon : \Sigma_X \to \RR_{> 0}$ be the constant map given by
\[
\upsilon(Y) = \max \{ \Vert s_1 \Vert_1, \ldots, \Vert s_l \Vert_1 \}
\]
for $Y \in \Sigma_X$.
Then $(R,\Nm)$ and $\upsilon$ satisfy the conditions
(1), (2) and (3) of Theorem~\ref{thm:base:strictly:small:sec}.
Hence the corollary follows.
\end{proof}

\begin{Corollary}
\label{cor:base:point:free:small:sec}
Let $\overline{L}$ be a continuous hermitian invertible sheaf on $X$.
If there are a positive integer $n_0$ and
$s_1, \ldots, s_l \in H^0(X, n_0L)$ such that $\Bs_{\QQ}(s_1, \ldots, s_l) = \emptyset$,
then there is $B \in \RR_{>0}$ such that
\[
\lambda_{\QQ}(H^0(X,nL), \Nm_{\sup}) \leq B n^{d(d-1)/2} 
\left( \max \{ \Vert s_1 \Vert_{\sup}, \ldots, \Vert s_l \Vert_{\sup} \} \right)^{n/n_0}
\]
for all $n \geq 1$.
\end{Corollary}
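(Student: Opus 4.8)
The plan is to deduce this from Theorem~\ref{thm:base:free:noetherian:lambda} by applying it to the full section ring $R = \bigoplus_{n=0}^{\infty} H^0(X, nL)$, equipped in each degree with the supremum norm $\Nm_{\sup}$ attached to $\overline{L}$. First I would record that $(R, \Nm_{\sup})$ is a normed graded ring over $\ZZ$: since $|s\cdot s'|(x) = |s|(x)\,|s'|(x)$ at every point $x$, one has $\Vert s\,s'\Vert_{\sup} \leq \Vert s\Vert_{\sup}\Vert s'\Vert_{\sup}$. The given $s_1, \ldots, s_l$ sit in $R_{n_0}$, are homogeneous of positive degree $n_0$, and satisfy $\Bs_{\QQ}(s_1,\ldots,s_l) = \emptyset$ by hypothesis; and the norm $\Vert s_i\Vert$ appearing in Theorem~\ref{thm:base:free:noetherian:lambda} is nothing but $\Vert s_i\Vert_{\sup}$, since it is the norm $\Nm_{n_0} = \Nm_{\sup}$ on $R_{n_0}$.

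The one substantive point is to check that $R_{\QQ}$ is noetherian. By flat base change along $\Spec\QQ \to \Spec\ZZ$ one has $R_{\QQ} \cong \bigoplus_{n} H^0(X_{\QQ}, nL_{\QQ})$ as graded $\QQ$-algebras, so it suffices to see that this geometric section ring is a finitely generated $\QQ$-algebra. Now $\Bs_{\QQ}(s_1,\ldots,s_l) = \emptyset$ means precisely (by Nakayama) that $s_1, \ldots, s_l$ generate $n_0 L_{\QQ}$ as an $\OO_{X_{\QQ}}$-module, so $n_0 L_{\QQ}$ is globally generated and $L_{\QQ}$ is semiample. I would then invoke the standard fact that a semiample invertible sheaf on a projective variety over a field has a finitely generated section ring: concretely, take the Stein factorization $g : X_{\QQ} \to W$ of the morphism $X_{\QQ} \to \PP^{l-1}_{\QQ}$ defined by $s_1, \ldots, s_l$, so that $g_*\OO_{X_{\QQ}} = \OO_W$ and $n_0 L_{\QQ} = g^*A$ for an ample invertible sheaf $A$ on the projective variety $W$; by the projection formula $H^0(X_{\QQ}, (n n_0 + j)L_{\QQ}) = H^0(W, nA \otimes g_*(jL_{\QQ}))$, so the $n_0$-th Veronese subring is $\bigoplus_n H^0(W, nA)$, which is finitely generated by ampleness of $A$, and the whole ring is a finite module over it by Serre's theorem applied to the coherent sheaves $g_*(jL_{\QQ})$, $0 \leq j < n_0$. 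Hence $R_{\QQ}$ is noetherian.

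Granting these checks, Theorem~\ref{thm:base:free:noetherian:lambda} applies to $(R,\Nm_{\sup})$ and $s_1,\ldots,s_l$ and yields a constant $B \in \RR_{>0}$ with
\[
\lambda_{\QQ}\bigl(H^0(X, nL), \Nm_{\sup}\bigr) = \lambda_{\QQ}(R_n, \Nm_n) \leq B\, n^{d(d-1)/2}\left( \max\left\{ \Vert s_1\Vert_{\sup}^{1/n_0}, \ldots, \Vert s_l\Vert_{\sup}^{1/n_0} \right\}\right)^n
\]
for all $n \geq 1$; since $t \mapsto t^{1/n_0}$ is increasing, the right-hand side equals $B\, n^{d(d-1)/2}\bigl(\max\{\Vert s_1\Vert_{\sup},\ldots,\Vert s_l\Vert_{\sup}\}\bigr)^{n/n_0}$, which is exactly the asserted bound.

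I expect the only genuine difficulty to be the finite-generation input in the second paragraph, i.e.\ knowing that a semiample invertible sheaf on a projective variety over a field has a finitely generated section ring, since this is the single ingredient not already developed in the earlier sections; the Stein-factorization argument sketched above settles it (or one simply cites it). Everything else is bookkeeping: submultiplicativity of the supremum norm, flat base change, and the elementary identity $\max\{\Vert s_i\Vert^{1/\deg s_i}\}^{\,n} = \max\{\Vert s_i\Vert\}^{n/n_0}$.
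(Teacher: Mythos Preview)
Your proposal is correct and follows essentially the same route as the paper: reduce to Theorem~\ref{thm:base:free:noetherian:lambda} applied to $R=\bigoplus_n H^0(X,nL)$ with the sup norms, and verify the only nontrivial hypothesis, that $R_\QQ$ is noetherian, via the semiample decomposition $\bigoplus_{r=0}^{n_0-1}\bigoplus_l H^0(W, lA\otimes g_*(rL_\QQ))$ as a finite module over the ample ring $\bigoplus_l H^0(W,lA)$. The paper isolates this finite-generation step as a separate lemma and argues with an arbitrary morphism $\phi:X\to Z$ with $\phi^*A=n_0L$ ample (the condition $g_*\OO=\OO_W$ from Stein factorization is not actually needed, since $H^0(X,F)=H^0(Z,\phi_*F)$ holds for any morphism), but the content is the same.
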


\begin{proof}
By Theorem~\ref{thm:base:free:noetherian:lambda}, it is sufficient to show the following lemma.
\end{proof}

\begin{Lemma}
Let $X$ be a projective variety over a field $k$ and $L$ an invertible sheaf on $X$.
If there is a positive integer $m$ such that $mL$ is base point free,
then $R = \bigoplus_{n=0}^{\infty} H^0(X, nL)$ is noetherian.
\end{Lemma}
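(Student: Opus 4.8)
The plan is to pass to the subring $R^{(m)} = \bigoplus_{n=0}^{\infty} H^0(X, nmL)$, show it is noetherian by pulling back along the morphism attached to $|mL|$ and invoking Serre's finiteness theorem on projective space, and then deduce that $R$ is a finite $R^{(m)}$-module.

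First I would exploit the hypothesis directly. Since $X$ is projective over $k$, $H^0(X, mL)$ is finite dimensional over $k$, and the base point freeness of $mL$ says precisely that $H^0(X, mL)\otimes_k \OO_X \to L^{\otimes m}$ is surjective; this produces a morphism $f : X \to P := \PP^N_k$ with $f^*\OO_P(1) \cong L^{\otimes m}$, where $N = \dim_k H^0(X, mL) - 1$. Because $X$ is proper over $k$ and $P$ is separated over $k$, $f$ is proper, so $\mathcal{B}_i := f_*(L^{\otimes i})$ is a coherent sheaf on $P$ for each $i = 0,1,\dots,m-1$. By the projection formula, $f_*\bigl(f^*\OO_P(n)\otimes L^{\otimes i}\bigr) \cong \OO_P(n)\otimes\mathcal{B}_i$, whence
\[
H^0(X,(nm+i)L) \;\cong\; H^0\bigl(P, \OO_P(n)\otimes\mathcal{B}_i\bigr)
\]
for all $n \ge 0$. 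Writing $M_i = \bigoplus_{n\ge 0} H^0(P, \OO_P(n)\otimes\mathcal{B}_i)$ and $S = \bigoplus_{n\ge 0} H^0(P,\OO_P(n)) = k[x_0,\dots,x_N]$, Serre's finiteness theorem on $P$ gives that each $M_i$ is a finitely generated graded $S$-module. In particular $R^{(m)} = M_0$ is a finite $S$-module; since $S$ is noetherian and $R^{(m)}$ is an $S$-algebra through $f^* : S \to R^{(m)}$, it follows that $R^{(m)}$ is a noetherian ring.

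Next I would upgrade $S$-finiteness of $M_i$ to $R^{(m)}$-finiteness. Under the identifications above, the $S$-module structure on $M_i$ is simply the restriction of scalars along $f^* : S \to R^{(m)}$ of the natural $R^{(m)}$-module structure on $M_i = \bigoplus_{n\ge 0} H^0(X,(nm+i)L)$; hence any finite set of $S$-generators of $M_i$ is also a set of $R^{(m)}$-generators, so each $M_i$ is a finite $R^{(m)}$-module. Now $R$, as an $m$-graded $R^{(m)}$-module, decomposes (forgetting its ring structure) as $R = \bigoplus_{i=0}^{m-1}\bigl(\bigoplus_{n\ge 0} R_{nm+i}\bigr) \cong \bigoplus_{i=0}^{m-1} M_i$, a finite direct sum of finite $R^{(m)}$-modules, so $R$ is a finite $R^{(m)}$-module. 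Being module-finite over the noetherian ring $R^{(m)}$, $R$ is noetherian, which is the assertion.

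I do not expect a genuine obstacle. The two points that deserve a little care are the properness of $f$ (this is what keeps $\mathcal{B}_i$ coherent and makes Serre's theorem applicable on $P$) and the observation that the $S$-action on $M_i$ factors through $R^{(m)}$, which is exactly what converts the cheaply obtained $S$-finiteness into the $R^{(m)}$-finiteness actually needed.
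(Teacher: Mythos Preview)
Your argument is correct and follows essentially the same route as the paper: use the morphism attached to $|mL|$ to a target carrying an ample line bundle, push forward $L^{\otimes i}$ for $0\le i\le m-1$, apply the projection formula to identify $R$ with a direct sum of the section modules $\bigoplus_n H^0(\text{target},\,\OO(n)\otimes f_*L^{\otimes i})$, and conclude that $R$ is module-finite over a noetherian ring. The only cosmetic difference is that the paper stops at finiteness of $R$ over $R' = S$ directly, whereas you insert the harmless intermediate step of passing through $R^{(m)}$.
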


\begin{proof}
Since $mL$ is base point free,
there are a projective variety $Z$, an ample invertible sheaf $A$ on $Z$ and
a morphism $\phi : X \to Z$ such that $\phi^*(A) = mL$.
As $A$ is ample, it is well known that
if $F$ is a coherent sheaf on $Z$, then
$R' = \bigoplus_{l=0}^{\infty} H^0(Z, lA)$ is noetherian and 
$\bigoplus_{l=0}^{\infty} H^0(Z, lA \otimes F)$
is a finitely generated $R'$-module. 
Note that
\begin{align*}
R & = \bigoplus_{n=0}^{\infty} H^0(X, nL) = 
\bigoplus_{r=0}^{m-1} \left( \bigoplus_{l=0}^{\infty}H^0(X,  (lm + r)L) \right) \\
& =
\bigoplus_{r=0}^{m-1} \left( \bigoplus_{l=0}^{\infty}H^0(Z,  l A \otimes \phi_*(rL)) \right).
\end{align*}
Therefore $R$ is noetherian because 
$R$ is a finitely generated $R'$-module.
\end{proof}

\begin{Remark}
Theorem~\ref{thm:intro:A} and Corollary~\ref{cor:intro:B} in the introduction are consequences of
Theorem~\ref{thm:base:free:noetherian:lambda} and Corollary~\ref{cor:base:point:free:small:sec} respectively together with
Lemma~\ref{lem:lambda:lambda:prime}.
The following examples show that base point freeness by strictly small sections is
substantially crucial.
\end{Remark}

\begin{Example}
\label{example:projective:line}
Let $\PP^1_{\ZZ} = \Proj(\ZZ[X,Y])$ be the projective line over $\ZZ$ and $\OO(1)$
the tautological invertible sheaf  on $\PP^1_{\ZZ}$.
Then $H^0(\PP^1_{\ZZ}, \OO(d))$ is naturally identified with
$\ZZ[X, Y]_d$.
Let $\beta, \gamma \in (0,1) (= \{ x \in \RR \mid 0 < x < 1 \})$ and
$\alpha := \beta^{1-(1/\gamma)} > 1$.
For each $d \geq 0$, we give a continuous metric $|\cdot |_d$ of $\OO(d)$ as follows:
for $(x:y) \in \PP^1_{\ZZ}(\CC)$ and $s \in \CC[X,Y]_d$,
\[
| s |_{d}(x:y) = \frac{| s(x,y) |}{\left( \max \{ \alpha | x |, \beta | y | \} \right)^d}.
\]
We set $\overline{\OO}(d) = (\OO(d), | \cdot |_d)$. Note that $\overline{\OO}(d) = \overline{\OO}(1)^{\otimes d}$.
Here we have the following:
\begin{align}
\addtocounter{Claim}{1}
\label{eqn:example:projective:line:1}
\left\langle \{ s \in H^0(X, \OO(d)) \mid \Vert s \Vert_{\sup} < 1 \} \right\rangle_{\ZZ} & =
\bigoplus_{ d \gamma < i \leq d} \ZZ X^i Y^{d-i}, \\
\addtocounter{Claim}{1}
\label{eqn:example:projective:line:2}
\left\langle \{ s \in H^0(X, \OO(d)) \mid \Vert s \Vert_{\sup} \leq 1 \} \right\rangle_{\ZZ} & =
\bigoplus_{ d \gamma \leq i \leq d} \ZZ X^i Y^{d-i}.
\end{align}
\begin{proof}
Indeed, by a straightforward calculation, 
\[
\Vert X^i Y^{d-i} \Vert_{\sup} = \frac{1}{\alpha^i\beta^{d-i}}
\]
for $0 \leq i \leq d$. Thus
$X^iY^{d-i}$ is a strictly small section for $i$ with $d \gamma < i \leq d$ because
$\alpha^i\beta^{d-i} > 1$. On the other hand, for $s = \sum_{i=0}^d a_i X^i Y^{d-i} \in \CC[X,Y]_d$,
we can see
\begin{align*}
\Vert s \Vert_{\sup} & \geq \sup\left\{ | s |_d(z:1) \mid | z | = \frac{\beta}{\alpha} \right\} =
\frac{1}{\beta^d} \sup \left\{ | s(z,1) | \mid | z | = \frac{\beta}{\alpha}  \right\} \\
& \geq \frac{1}{\beta^d} \sqrt{ \int_0^{1} \left| s\left(\left(\frac{\beta}{\alpha}\right)e^{2\pi\sqrt{-1}\theta},1\right) \right|^2 d\theta } \\
& =  \frac{1}{\beta^d} \sqrt{\sum_{0 \leq i,j \leq d} \int_0^{1} a_i \bar{a}_j 
\left(\frac{\beta}{\alpha}\right)^{i+j} e^{2\pi\sqrt{-1}(i-j)\theta}d\theta} \\
& = \sqrt{\sum_{i=0}^d \left( \frac{| a_i |}{\alpha^i\beta^{d-i}} \right)^2}.
\end{align*}
Thus, if 
$s = \sum_{i=0}^d a_i X^i Y^{d-i} \in \ZZ[X,Y]_d$
is a strictly small section, then
$a_j = 0$ for $j$ with $0 \leq j \leq d\gamma$ because $\alpha^j\beta^{d-j} \leq 1$.
These observations yield \eqref{eqn:example:projective:line:1}.
Similarly we obtain \eqref{eqn:example:projective:line:2}.
\end{proof}
\end{Example}

\begin{Example}
\label{example:projective:plane}
Let $\PP^2_{\ZZ} = \Proj(\ZZ[X,Y,Z])$ be the projective plane over $\ZZ$ and $\OO(1)$
the tautological invertible sheaf  on $\PP^2_{\ZZ}$.
Let $\Delta$ be the arithmetic subvariety of $\PP^2_{\ZZ}$ given by
the homogeneous ideal $Y \ZZ[X,Y,Z] + Z \ZZ[X,Y,Z]$.
Let $\mu : X \to \PP^2_{\ZZ}$ be the blowing-up along $\Delta$ and $E$ the exceptional divisor of $\mu$.
Note that $E$ is a Cartier divisor.
We set $L = \mu^*(\OO(1)) + \OO_X(E)$ and $R = \bigoplus_{n=0}^{\infty} H^0(X, nL)$.
Since $\mu_*(nL) = \OO(n)$ for all $n \in \ZZ_{\geq 0}$,
the natural ring homomorphism
\[
\mu^*:  \bigoplus_{n=0}^{\infty} H^0(\PP^2_{\ZZ}, \OO(n)) \longrightarrow R
\]
yields a ring isomorphism, and 
\[
\{ x \in X \mid \text{$s(x) = 0$ for all $s \in H^0(X, nL)$}\} = E
\]
for $n \in \ZZ_{>0}$. Here we give a metric $\vert\cdot\vert_{FS}$ of $\OO(1)$
in the following way:
for $s \in H^0(\PP^2_{\CC}, \OO(1)) = \CC[X,Y,Z]_1$ and $(x:y:z) \in \PP^2(\CC)$,
\[
\vert s \vert_{FS} (x:y:z) = 
\frac{\vert s (x,y,z) \vert}{\sqrt{\vert x \vert^2 + \vert y \vert^2 + \vert z \vert^2}}.
\]
We set $\overline{\OO}(n) = (\overline{\OO}(1), \vert\cdot\vert_{FS})^{\otimes n}$.
Then it is easy to check that
$\Vert X^i Y^j Z^k \Vert_{\sup} \leq 1$
for all $n > 0$ and $i,j,k \in \ZZ_{\geq 0}$ with $i+j+k= n$.
Let $t$ be the canonical section of $\OO_X(E)$.
We choose a $C^{\infty}$-metric $\vert\cdot\vert_E$ of $\OO_X(E)$ such that
$\Vert t \Vert_{\sup} < 1$, and set
\[
\overline{L} = \mu^*(\overline{\OO}(1)) + (\OO_X(E), \vert\cdot\vert_E).
\]
Then 
$\Vert \mu^*(X^i Y^j Z^k) \otimes t \Vert _{\sup} < 1$
for all $n > 0$ and $i,j,k \in \ZZ_{\geq 0}$ with $i+j+k= n$.
As a consequence, $R_n$ has  non-empty base loci, but possesses a free basis consisting of strictly small sections.
However, in this example, the free basis comes from the base point free $\ZZ$-module $H^0(\PP^2_{\ZZ}, \OO(n))$.
\end{Example}

\section{Variants of arithmetic Nakai-Moishezon's criterion}

Let $X$ be a projective arithmetic variety and $Y$ an arithmetic subvariety of $X$.
Let $\overline{L}$ be a continuous hermitian invertible sheaf on $X$.
We denote 
\[
\Image(H^0(X, L) \to H^0(Y, \rest{L}{Y}))
\]
by $H^0(X|Y, L)$.
Let $\Vert\cdot\Vert_{\sup,\quot}^{X|Y}$ be the the quotient norm of  $H^0(X|Y, L) \otimes_{\ZZ} \RR$ induced by
\[
H^0(X, L) \otimes_{\ZZ} \RR \twoheadrightarrow H^0(X|Y, L) \otimes_{\ZZ} \RR
\]
and
the norm $\Vert\cdot\Vert_{\sup}$ on $H^0(X, L) \otimes_{\ZZ} \RR$.
As in \cite{MoArLin}, we define $\avol_{\quot}(X|Y, \overline{L})$ to be
\[
\avol_{\quot}(X|Y, \overline{L}) := \limsup_{m\to\infty} \frac{\log \#
\left\{ s \in H^0(X|Y, mL) \mid \Vert s \Vert_{\sup,\quot}^{X|Y} \leq 1 \right\} }{m^{\dim Y}/(\dim Y)!}.
\]
Then we have the following variants of
arithmetic Nakai-Moishezon's criterion.
Theorem~\ref{thm:Nakai:Moishezon:2} is a slight generalization of
the original criterion due to Zhang \cite{ZhPos}, that is,
we do not assume that $L_{\QQ}$ is ample.

\begin{Theorem}
\label{thm:Nakai:Moishezon:1}
If $\avol_{\quot}(X|Y, \overline{L}) > 0$ for all arithmetic subvarieties $Y$ of $X$,
then $L_{\QQ}$ is ample and
there is a positive integer $n_0$ such that, for all $n \geq n_0$,
$H^0(X, nL)$ has a free $\ZZ$-basis consisting of strictly small sections.
\end{Theorem}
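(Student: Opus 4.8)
The plan is to first prove that $L_{\QQ}$ is ample, and then to produce the free basis by applying Theorem~\ref{thm:base:strictly:small:sec} to the section ring $R:=\bigoplus_{n\ge0}H^0(X,nL)$ equipped with the supremum norms $\Nm_{\sup}$; for this $R$ one has $R_{Y,n}=H^0(X|Y,nL)$ and $\Nm_{Y,n}=\Vert\cdot\Vert_{\sup,\quot}^{X|Y}$, so that $\avol_{\quot}(X|Y,\overline{L})$ is exactly the invariant attached to the normed lattices $(R_{Y,m},\Nm_{Y,m})$. The tool I would use repeatedly is an elementary lattice-point estimate: writing $\rho_m:=\dim_{\QQ}(R_{Y,m})_{\QQ}$ and letting $\mu_1\le\cdots\le\mu_{\rho_m}$ be the successive minima of the normed lattice $(R_{Y,m},\Nm_{Y,m})$, a standard bound on the number of lattice points in a convex body gives
\[
\log\#\left\{s\in R_{Y,m}\mid\Vert s\Vert_{Y,m}\le1\right\}\le\rho_m\log(4\rho_m)+\rho_m\max\{0,\log(1/\mu_1)\};
\]
moreover $\rho_m\le h^0\!\left(Y_{\QQ},mL_{\QQ}|_{Y_{\QQ}}\right)$, and a Liouville-type lower bound --- a nonzero integral section of $mL|_Y$ has supremum norm at least $c_Y^{\,m}$ for a fixed constant $c_Y\in(0,1]$, equivalently the maximal asymptotic slope of $\overline{L}|_Y$ is finite --- yields $\mu_1\ge c_Y^{\,m}$, so the right-hand side above is at most $\rho_m\log(4\rho_m)+\rho_m\,m\log(1/c_Y)$.

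First I would prove $L_{\QQ}$ ample. Fix $Y\in\Sigma_X$. If $L_{\QQ}|_{Y_{\QQ}}$ were not big on $Y_{\QQ}$, then $h^0(Y_{\QQ},mL_{\QQ}|_{Y_{\QQ}})=o(m^{\dim Y-1})$, hence $\rho_m=o(m^{\dim Y-1})$, hence by the estimate above $\log\#\{s\in R_{Y,m}\mid\Vert s\Vert_{Y,m}\le1\}=o(m^{\dim Y})$, contradicting $\avol_{\quot}(X|Y,\overline{L})>0$. Thus $L_{\QQ}|_{Y_{\QQ}}$ is big for every $Y\in\Sigma_X$, and since every integral closed subvariety of $X_{\QQ}$ is the generic fibre of an element of $\Sigma_X$, the restriction of $L_{\QQ}$ to every subvariety of $X_{\QQ}$ is big. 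A line sheaf on a projective variety over a field whose restriction to every subvariety is big is ample: by induction on dimension one may assume the restriction to every proper subvariety is already ample, so, writing a large multiple of $L_{\QQ}$ as an ample divisor plus an effective one, $L_{\QQ}$ has positive degree on every curve, hence $L_{\QQ}$ is nef, hence $(L_{\QQ})^{\dim X_{\QQ}}=\vol(L_{\QQ})>0$, and the Nakai-Moishezon criterion gives ampleness. So $L_{\QQ}$ is ample.

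Granting ampleness of $L_{\QQ}$, I would check the hypotheses of Theorem~\ref{thm:base:strictly:small:sec} for $(R,\Nm_{\sup})$ and a suitable map $\upsilon\colon\Sigma_X\to\RR_{>0}$. Hypothesis (1): $R_{\QQ}=\bigoplus_nH^0(X_{\QQ},nL_{\QQ})$ is the section ring of the ample sheaf $L_{\QQ}$, hence a finitely generated $\QQ$-algebra, hence noetherian. Hypothesis (2): for each $Y$, Serre vanishing $H^1(X_{\QQ},nL_{\QQ}\otimes\mathcal I_{Y_{\QQ}})=0$ for $n\gg1$ gives $(R_{Y,n})_{\QQ}=H^0(Y_{\QQ},nL_{\QQ}|_{Y_{\QQ}})$ for $n\gg1$. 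Hypothesis (3): now $L_{\QQ}|_{Y_{\QQ}}$ is big, so $\rho_m\le C_Ym^{\dim Y-1}$, while $\avol_{\quot}(X|Y,\overline{L})>0$ gives $\log\#\{s\in R_{Y,m}\mid\Vert s\Vert_{Y,m}\le1\}\ge c_0\,m^{\dim Y}$ for infinitely many $m$; feeding these two facts into the estimate above forces $\log(1/\mu_1)\ge\epsilon_Y\,m$ for some $\epsilon_Y>0$ and infinitely many $m$. Choosing one such $m=n_1$ provides a nonzero $s\in R_{Y,n_1}$ with $\Vert s\Vert_{Y,n_1}=\mu_1\le\upsilon(Y)^{n_1}$, where $\upsilon(Y):=e^{-\epsilon_Y}$; by construction $\upsilon(Y)<1$ for every $Y$. (Alternatively, continuity of $\avol_{\quot}$ in the metric produces such a strictly small section directly.)

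Then Theorem~\ref{thm:base:strictly:small:sec} supplies $B\in\RR_{>0}$ and a finite subset $S\subseteq\Sigma_X$ with $\lambda_{\QQ}(H^0(X,nL),\Nm_{\sup})\le Bn^{d(d-1)/2}\theta^{\,n}$ for all $n\ge1$, where $\theta:=\max\{\upsilon(Y)\mid Y\in S\}<1$. Since $\rank H^0(X,nL)=h^0(X_{\QQ},nL_{\QQ})=O(n^{d-1})$, Lemma~\ref{lem:lambda:lambda:prime} gives
\[
\lambda_{\ZZ}(H^0(X,nL),\Nm_{\sup})\le\rank H^0(X,nL)\cdot\lambda_{\QQ}(H^0(X,nL),\Nm_{\sup})=O\!\left(n^{(d-1)(d+2)/2}\right)\theta^{\,n}\longrightarrow0,
\]
so there is a positive integer $n_0$ with $\lambda_{\ZZ}(H^0(X,nL),\Nm_{\sup})<1$ for all $n\ge n_0$. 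Because $H^0(X,nL)$ is $\ZZ$-torsion-free ($X$ being integral), this says precisely that $H^0(X,nL)$ has a free $\ZZ$-basis consisting of strictly small sections for all $n\ge n_0$. The step I expect to be the main obstacle is the proof of ampleness --- concretely, bridging from the arithmetic positivity $\avol_{\quot}(X|Y,\overline{L})>0$ to geometric bigness of the restrictions $L_{\QQ}|_{Y_{\QQ}}$ --- and within it the uniform-in-$m$ lower bound $\mu_1\ge c_Y^{\,m}$ for the first successive minimum of $(R_{Y,m},\Nm_{Y,m})$ is the essential input.
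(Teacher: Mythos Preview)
Your proof is correct and follows the same architecture as the paper's: deduce that $L_{\QQ}$ is ample, verify the three hypotheses of Theorem~\ref{thm:base:strictly:small:sec} for $(R,\Nm_{\sup})$ with a map $\upsilon$ taking values in $(0,1)$, and conclude via Lemma~\ref{lem:lambda:lambda:prime}. Where you differ is in the justification of the two substantive sub-claims. For bigness of $\rest{L_{\QQ}}{Y_{\QQ}}$, the paper simply notes $\avol\left(Y,\rest{\overline{L}}{Y}\right)\ge\avol_{\quot}(X|Y,\overline{L})>0$ and then cites \cite[Corollary~2.4]{Yuan} or \cite[Theorem~4.6]{MoCont} for the implication ``positive arithmetic volume $\Rightarrow$ big generic fibre''; you instead argue directly by bounding the number of lattice points in the unit ball in terms of the rank and the first successive minimum, together with a Liouville-type lower bound $\mu_1\ge c_Y^{\,m}$. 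For hypothesis~(3), the paper invokes continuity of $\avol_{\quot}$ under metric perturbation \cite[Proposition~6.1]{MoArLin} to get $\avol_{\quot}(X|Y,\overline{L}-\overline{\OO}(\epsilon))>0$, which immediately supplies a section with quotient norm $<1$; you again reuse your lattice-point estimate to force $\mu_1\le e^{-\epsilon_Y m}$ along a subsequence and extract the section from that. Your route is more self-contained and avoids the two external inputs, at the price of making the geometry-of-numbers bound and the Liouville estimate explicit; the paper's route is shorter precisely because those ingredients are already packaged inside the cited theorems.
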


\begin{proof}
First of all, note that
\[
\avol\left(Y, \rest{\overline{L}}{Y}\right) \geq \avol_{\quot}(X|Y, \overline{L}) > 0
\]
for all arithmetic subvarieties $Y$ of $X$. In particular, 
by \cite[Corollary~2.4]{Yuan} or \cite[Theorem~4.6]{MoCont}, $\rest{L_{\QQ}}{Y_{\QQ}}$ is big.
Thus, by algebraic Nakai-Moishezon's criterion, $L_{\QQ}$ is ample.
Let us consider a normed graded ring
\[
(R,\Nm) = \bigoplus_{n \in \ZZ_{\geq 0}} (H^0(X, nL), \Nm_{\sup}).
\]
As $L_{\QQ}$ is ample,  $R$ satisfies
the conditions (1) and (2) of Theorem~\ref{thm:base:strictly:small:sec}.
Moreover, if we take a sufficiently small positive number $\epsilon$,
then 
\[
\avol_{\quot}(X|Y, \overline{L} - \overline{\OO}(\epsilon)) > 0
\]
by \cite[(2) in Proposition~6.1]{MoArLin}, 
which means that we can choose a map
$\upsilon : \Sigma_X \to \RR_{> 0}$ such that
$\upsilon(Y) < 1$ for all $Y \in \Sigma_X$ and
the condition (3) of Theorem~\ref{thm:base:strictly:small:sec} holds
for $(R,\Nm)$ and $\upsilon$.
Thus the last assertion follows.
\end{proof}

\begin{Theorem}
\label{thm:Nakai:Moishezon:2}
We assume that $X$ is generically smooth,
the metric of $\overline{L}$ is $C^{\infty}$,
$L$ is nef on every fiber of $X \to \Spec(\ZZ)$ and that
the first Chern form $c_1(\overline{L})$ is
semipositive on $X(\CC)$.
If $\adeg\left((\rest{\hat{c}_1(\overline{L})}{Y})^{\dim Y}\right) > 0$
for all arithmetic subvarieties $Y$ of $X$, 
then $L_{\QQ}$ is ample and
there is a positive integer $n_0$ such that, for all $n \geq n_0$,
$H^0(X, nL)$ has a free $\ZZ$-basis consisting of strictly small sections.
\end{Theorem}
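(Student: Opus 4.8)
The plan is to reduce the statement to Theorem~\ref{thm:Nakai:Moishezon:1} by verifying its hypothesis, i.e.\ that $\avol_{\quot}(X|Y, \overline{L}) > 0$ for every arithmetic subvariety $Y$ of $X$. The starting observation is that the three assumptions on $\overline{L}$ say precisely that $\overline{L}$ is a nef $C^{\infty}$-hermitian invertible sheaf on $X$; hence, for every arithmetic subvariety $Y$, the restriction $\rest{\overline{L}}{Y}$ is again a nef $C^{\infty}$-hermitian invertible sheaf on $Y$, and $\adeg\bigl((\rest{\hat{c}_1(\overline{L})}{Y})^{\dim Y}\bigr) \geq 0$, with strict inequality by hypothesis.

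Fix an arithmetic subvariety $Y$ and put $e = \dim Y$. The key step is the inequality
\[
\avol_{\quot}(X|Y, \overline{L}) \geq \adeg\bigl((\rest{\hat{c}_1(\overline{L})}{Y})^{e}\bigr).
\]
To obtain it I would first pass, by means of a generic resolution $\tilde{Y} \to Y$ (which changes neither side, by birational invariance of the space of sections and the projection formula for arithmetic intersection numbers), to the case in which $Y$ is generically smooth; then the arithmetic Hilbert--Samuel formula for nef $C^{\infty}$-hermitian invertible sheaves yields
\[
\ah\bigl(Y, m\rest{\overline{L}}{Y}\bigr) = \frac{\adeg\bigl((\rest{\hat{c}_1(\overline{L})}{Y})^{e}\bigr)}{e!}\, m^{e} + o(m^{e}),
\]
so that $\avol(Y, \rest{\overline{L}}{Y}) = \adeg\bigl((\rest{\hat{c}_1(\overline{L})}{Y})^{e}\bigr)$. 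It then remains to check that passing from the supremum norm on $Y$ to the quotient supremum norm induced by $H^{0}(X, mL) \twoheadrightarrow H^{0}(X|Y, mL)$ costs nothing asymptotically, i.e.\ that $\avol_{\quot}(X|Y, \overline{L}) = \avol(Y, \rest{\overline{L}}{Y})$: this is the arithmetic analogue of the equality $\vol_{X|Y}(L) = \deg\bigl((\rest{L}{Y})^{\dim Y}\bigr)$ valid for a nef $L$, and it is available in the required generality from \cite{MoArLin}; here the nef-ness of $\overline{L}$ on all of $X$ (not merely on $Y$) is exactly what guarantees that strictly small sections on $Y$ extend to $X$ with essentially no increase of supremum norm. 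Since the right-hand side of the displayed inequality is positive by hypothesis, we conclude $\avol_{\quot}(X|Y, \overline{L}) > 0$.

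Having verified $\avol_{\quot}(X|Y, \overline{L}) > 0$ for every arithmetic subvariety $Y$, I would invoke Theorem~\ref{thm:Nakai:Moishezon:1} verbatim, which gives at once that $L_{\QQ}$ is ample and that there is $n_{0}$ with $H^{0}(X, nL)$ possessing a free $\ZZ$-basis consisting of strictly small sections for all $n \geq n_{0}$ --- this is the assertion. The main obstacle is the middle step: the clean Hilbert--Samuel asymptotics live on $Y$ itself, while Theorem~\ref{thm:Nakai:Moishezon:1} is fed the \emph{restricted} quantity $\avol_{\quot}(X|Y, \overline{L})$, so one must upgrade those asymptotics to count only the small sections on $Y$ arising from global sections on $X$ of controlled supremum norm (and, more routinely, reconcile the generic resolution with the restriction of the $C^{\infty}$ metric). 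Both points are by now standard and are contained in the references cited above once the nef-ness of $\overline{L}$ on $X$ is established.
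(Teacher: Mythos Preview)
Your route differs from the paper's: rather than reducing to Theorem~\ref{thm:Nakai:Moishezon:1}, the paper goes directly to Theorem~\ref{thm:base:strictly:small:sec}. It first uses the generalized Hodge index theorem \cite[Theorem~6.2]{MoCont} to get $\avol(Y,\rest{\overline{L}}{Y})\geq\adeg\bigl((\rest{\hat c_1(\overline{L})}{Y})^{\dim Y}\bigr)>0$, deduces that $L_{\QQ}$ is ample via algebraic Nakai--Moishezon, and then checks condition~(3) of Theorem~\ref{thm:base:strictly:small:sec} by invoking Zhang's extension theorems \cite[Theorems~3.3 and~3.5]{ZhPos} to lift a power of a strictly small section on $Y$ to a strictly small real section on $X$. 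This is more economical than your plan: Theorem~\ref{thm:base:strictly:small:sec} asks only for a \emph{single} nonzero element of $R_{Y,n}$ with small quotient norm, whereas your reduction to Theorem~\ref{thm:Nakai:Moishezon:1} needs the quantitative inequality $\avol_{\quot}(X|Y,\overline{L})>0$.

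The heart of both arguments is the same---controlling the quotient norm on $H^0(X|Y,nL)$ by extending small sections from $Y$ to $X$---but you treat precisely this step as a black box, and that is where your write-up is thin. Zhang's extension theorem, which is the actual tool behind the claim, requires $L_{\QQ}$ ample (and $X_{\QQ}$ smooth), not merely $\overline{L}$ nef; since you defer ampleness to the \emph{conclusion} of Theorem~\ref{thm:Nakai:Moishezon:1}, your argument as written is circular. The fix is exactly what the paper does: establish $L_{\QQ}$ ample first, from $\avol(Y,\rest{\overline{L}}{Y})>0$ and algebraic Nakai--Moishezon, and only then appeal to extension. Note also that what \cite{MoArLin} supplies (and what is used in the proof of Theorem~\ref{thm:Nakai:Moishezon:1}) is a continuity statement for $\avol_{\quot}$, not the comparison $\avol_{\quot}(X|Y,\overline{L})=\avol(Y,\rest{\overline{L}}{Y})$ that you invoke; that comparison again comes down to Zhang's extension, so you should name it.
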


\begin{proof}
By virtue of the Generalized Hodge index theorem \cite[Theorem~6.2]{MoCont},
\[
\avol\left(Y, \rest{\overline{L}}{Y}\right) \geq \adeg\left((\rest{\hat{c}_1(\overline{L})}{Y})^{\dim Y}\right) > 0.
\]
Thus $L_{\QQ}$ is ample as in the proof of Theorem~\ref{thm:Nakai:Moishezon:1}.
In particular, if we set 
\[
(R, \Nm) = \bigoplus_{n \geq 0} (H^0(X, nL), \Nm_{\sup}),
\]
then $R$ satisfies the conditions (1) and (2) of Theorem~\ref{thm:base:strictly:small:sec}.
As $\avol\left(Y, \rest{\overline{L}}{Y}\right) > 0$, we can find a non-zero strictly
small section $s$ of $\rest{n_1L}{Y}$ for some positive integer $n_1$.
By \cite[Theorem~3.3 and Theorem~3.5]{ZhPos},
there are a positive integer $n_2$ and $s' \in H^0(X, n_2n_1L) \otimes \RR$
with $\rest{s'}{Y} = s^{\otimes n_2}$ and $\Vert s' \Vert_{\sup} < 1$.
Thus a map $\upsilon : \Sigma_X \to \RR_{> 0}$ given by
\[
\upsilon(Y) = \left( \Vert s' \Vert_{\sup}\right)^{1/n_1n_2}
\]
satisfies the condition (3) of
Theorem~\ref{thm:base:strictly:small:sec}.
Hence the theorem follows from Theorem~\ref{thm:base:strictly:small:sec}.
\end{proof}

\end{document}